\definecolor{darkgreen}{rgb}{0,0.5,0}
\def\qed{\hfill$\square$}
\numberwithin{equation}{section}
\newtheorem{thm}[equation]{\sc Theorem}
\newtheorem{lem}[equation]{\sc Lemma}
\newtheorem{cor}[equation]{\sc Corollary}
\newtheorem{prop}[equation]{\sc Proposition}
\newtheoremstyle{notation}{3pt}{3pt}{}{}{\itshape}{:}{.5em}{\thmname{#1}}
\theoremstyle{notation}
\newtheorem{rem}{\it Remark}
\newtheorem{defin}{\it Definition}
\newtheorem{ex}{\it Example}
\newtheorem{dedication}{\it Dedication}
\newtheorem{acknowledgement}{\it Acknowledgement}
\def\Cok{\mbox{\rm Cok}}
\def\mod{\mbox{{\rm mod}}}
\def\Hom{\mbox{\rm Hom}}
\def\End{\mbox{\rm End}}
\def\rad{\mbox{\rm rad}}
\def\soc{\mbox{\rm soc\,}}
\newcounter{boxsize}
\newcounter{tempcounter}
\newcommand{\smallentryformat}{\scriptstyle\sf}
\newcommand\smbox{\put(0,0){\line(1,0){\value{boxsize}}}%
  \put(\value{boxsize},0){\line(0,1){\value{boxsize}}}%
  \put(0,0){\line(0,1){\value{boxsize}}}%
  \put(0,\value{boxsize}){\line(1,0){\value{boxsize}}}}
\newcommand\numbox[1]{\put(0,0)\smbox%
  \put(0,0){\makebox(\value{boxsize},\value{boxsize})[c]{%
      $\smallentryformat#1$}}}
\newcommand\singlebox[1]{\raisebox{-.4ex}{\begin{picture}(4,0)\setcounter{boxsize}{3}%
    \put(0,0)\smbox%
    \put(0,0){\makebox(\value{boxsize},\value{boxsize})[c]{%
      $\scriptstyle\sf#1$}}\end{picture}}}
\newcommand\boxwithrow[2]{\raisebox{-.4ex}{\begin{picture}(5,0)\setcounter{boxsize}{3}%
    \put(0,0)\smbox%
    \put(0,0){\makebox(\value{boxsize},\value{boxsize})[c]{%
      $\scriptstyle\sf#1$}}
    \put(\value{boxsize},1.5){\line(1,0){1}}
    \put(\value{boxsize},1){$\;\scriptscriptstyle\sf#2$}
    \end{picture}}}
\def\sbullet{\makebox(0,0){$\scriptstyle\bullet$}}
\newcommand\boxes[2]{\ifthenelse{#2=3}{$\scriptstyle P_2^{#1}$}{%
                                       $\scriptstyle P_{#2}^{#1}$}}
\newcommand\mylabel[1]{\label{#1}}
\begin{document}

\begin{center}
  \Large
  Box moves on Littlewood-Richardson tableaux\\[.3ex]
and an application to invariant subspace varieties

  \normalsize
  \bigskip Justyna Kosakowska\footnote{The first named author
    is partially supported 
    by Research Grant No.\ DEC-2011/02/A/ ST1/00216
    of the Polish National Science Center.}

  \footnotesize
  \smallskip{\it Faculty of Mathematics and Computer Science,
              Nicolaus Copernicus University\\
              ul.\ Chopina 12/18, 87-100 Toru\'n, Poland}\\
           {\tt justus@mat.umk.pl}              

  \normalsize         
  \medskip Markus Schmidmeier\footnote{This research is partially supported 
    by a~Travel and Collaboration Grant from the Simons Foundation
    (Grant number 245848 to the second named author).}

  \smallskip\footnotesize     {\it  Department of Mathematical Sciences, 
              Florida Atlantic University\\
              777 Glades Road,
              Boca Raton, Florida 33431}\\
     {\tt markus@math.fau.edu}
\end{center}

\medskip
{\bf Abstract:} 
    In his 1951 book
    {\it ``Infinite Abelian Groups'',} Kaplansky gives a combinatorial
    characterization of the isomorphism types of embeddings of a 
    cyclic subgroup in a finite abelian group.
    In this paper we first use partial maps on Littlewood-Richardson tableaux
    to generalize this result to finite direct sums of such embeddings. 
    We then focus on an application to
    invariant subspaces of nilpotent linear operators.
  We develop a criterion to decide if two irreducible components in
  the representation space are in the boundary partial order.

  \smallskip
  {\bf Keywords:}
    subgroup embedding, invariant subspace, Littlewood-Ri\-chard\-son tableau,
    partial map, representation space, boundary condition

    \smallskip
    {\bf MSC:} 16G10, 20K27, 14L30

\section{Introduction}

Let $\Lambda$ be a discrete valuation ring with maximal ideal generator $p$
and radical factor field $k$.

 We study the category of all embeddings $(A\subset B)$ 
of a submodule $A$ in a finite length $\Lambda$-module $B$.
Two examples are of particular interest:
If $\Lambda=\widehat{\mathbb Z_{(p)}}$ is the completion of the ring
of $p$-adic integers, then we are dealing with embeddings of a subgroup
in a finite abelian $p$-group.
Their classification, up to isomorphy,
is the well-known Birkhoff Problem \cite{birkhoff}.
On the other hand, if $\Lambda=k[[T]]$ is the power series ring,
then an embedding $(A\subset B)$ consists of a nilpotent linear operator $B$
and an invariant subspace $A$.

In general, any
 finite length $\Lambda$-module $A$ is isomorphic to a~direct sum 
 $$A\cong \Lambda/(p^{\alpha_1})\oplus \Lambda/(p^{\alpha_2})\oplus \ldots\oplus \Lambda/(p^{\alpha_n}),$$
 where $\alpha_1\geq \alpha_2\geq\ldots\geq \alpha_n$. 
Therefore, there is a~bijection ($A\mapsto \alpha=(\alpha_1,\ldots,\alpha_n)$)
 between the set of isomorphism classes of finite length $\Lambda$-modules 
and the set of partitions. 
 The partition $\alpha$ is the {\it type} of $A$
and we write $A\cong N_\alpha(\Lambda)=N_\alpha$.

 It is natural to associate with an~embedding $(A\subset B)$ of finite length $\Lambda$-modules
 the~triple of partitions $(\alpha,\beta,\gamma)$,
 where $\alpha$, $\beta$, $\gamma$ are the types 
of $A$, $B$ and $C=B/A$, respectively.

\medskip
We call an embedding $(A\subset B)$ {\it cyclic} provided 
the submodule $A$ is cyclic
as a~$\Lambda$-module,
that is, $A$ is either indecomposable or zero.
Cyclic embeddings have been classified, up to isomorphy, by 
Kaplansky \cite[Theorem~25]{kap} in terms of the ``height
sequence'' or ``Ulm sequence'' of the submodule generator.
The first aim in this paper is to derive a simple combinatorial description
of the isomorphism types of direct sums of cyclic embeddings.
They are given in terms of partial maps on Littlewood-Richardson tableaux
(Theorem~\ref{theorem-combinatorial-classification}).

\medskip
The main goal
of the paper is to shed light on the geometry of the 
representation space of invariant subspace varieties.
Suppose that $\Lambda$ is the power series ring $k[[T]]$ with 
coefficients in an algebraically closed field $k$.
The embeddings corresponding to partition type $(\alpha,\beta,\gamma)$
form a constructible subset 
$\mathbb V_{\alpha,\gamma}^\beta=\{f:N_\alpha\hookrightarrow N_\beta: \Cok f\cong N_\gamma\}$ 
of the affine variety
of all $k$-linear homomorphisms $\Hom_k(A,B)$.
  By the Theorem of Green and Klein \cite{klein1},
  the variety is non-empty if and only if there exists a Littlewood-Richardson
  tableau (LR-tableau) of type $(\alpha,\beta,\gamma)$.
  More precisely, we can assign to each embedding
  a tableau; by $\mathbb V_\Gamma$ we denote the
  subset of $\mathbb V_{\alpha,\gamma}^\beta$ of all embeddings
  with tableau $\Gamma$. Then

$$\mathbb V_{\alpha,\gamma}^\beta=\bigcup^\bullet_{\Gamma}\mathbb{V}_\Gamma, $$
where the union is
indexed by the LR-tableaux of type $(\alpha,\beta,\gamma)$.
The closures (in the Zariski topology)
$\overline{\mathbb{V}}_\Gamma$ form the irreducible components of $\mathbb V_{\alpha,\gamma}^\beta$,
see Section \ref{sec-tableaux} and \cite{ks-mfo} for details.

We are interested in how the irreducible components are linked
in the representation space $\mathbb V_{\alpha,\gamma}^\beta$.
We illustrate this in an example.

\begin{ex}
For $\alpha=(3,2)$, $\beta=(5,4,3,2,1)$ and $\gamma=(4,3,2,1)$,
we consider embeddings $(A\subset B)$ where $A\cong N_\alpha=\Lambda/(p^3)\oplus
\Lambda/(p^2)$, $B=N_\beta$, and where the cokernel $B/A$ 
is isomorphic to $N_\gamma$.
Associated with each embedding is an LR-tableau (see Section \ref{sec-tableaux}); 
we denote by $\mathbb V_\Gamma$
the collection of all embeddings in the affine variety 
$\Hom_k(N_\alpha,N_\beta)$ which have the corresponding tableau $\Gamma$.
There are five LR-tableaux of type $(\alpha,\beta,\gamma)$:

$$
\begin{picture}(18,15)
\multiput(0,12)(3,0)4{\smbox}
\put(12,12){\numbox{1}}
\multiput(0,9)(3,0)3{\smbox}
\put(9,9){\numbox{2}}
\multiput(0,6)(3,0)2{\smbox}
\put(6,6){\numbox{3}}
\put(0,3){\smbox}
\put(3,3){\numbox{1}}
\put(0,0){\numbox{2}}
\put(10,0){$\Gamma_1$}
\end{picture}
\quad
\begin{picture}(18,15)
\multiput(0,12)(3,0)4{\smbox}
\put(12,12){\numbox{1}}
\multiput(0,9)(3,0)3{\smbox}
\put(9,9){\numbox{2}}
\multiput(0,6)(3,0)2{\smbox}
\put(6,6){\numbox{1}}
\put(0,3){\smbox}
\put(3,3){\numbox{3}}
\put(0,0){\numbox{2}}
\put(10,0){$\Gamma_{2}$}
\end{picture}
\quad
\begin{picture}(18,15)
\multiput(0,12)(3,0)4{\smbox}
\put(12,12){\numbox{1}}
\multiput(0,9)(3,0)3{\smbox}
\put(9,9){\numbox{1}}
\multiput(0,6)(3,0)2{\smbox}
\put(6,6){\numbox{2}}
\put(0,3){\smbox}
\put(3,3){\numbox{3}}
\put(0,0){\numbox{2}}
\put(10,0){$\Gamma_{3a}$}
\end{picture}
\quad
\begin{picture}(18,15)
\multiput(0,12)(3,0)4{\smbox}
\put(12,12){\numbox{1}}
\multiput(0,9)(3,0)3{\smbox}
\put(9,9){\numbox{2}}
\multiput(0,6)(3,0)2{\smbox}
\put(6,6){\numbox{1}}
\put(0,3){\smbox}
\put(3,3){\numbox{2}}
\put(0,0){\numbox{3}}
\put(10,0){$\Gamma_{3b}$}
\end{picture}
\quad
\begin{picture}(18,15)
\multiput(0,12)(3,0)4{\smbox}
\put(12,12){\numbox{1}}
\multiput(0,9)(3,0)3{\smbox}
\put(9,9){\numbox{1}}
\multiput(0,6)(3,0)2{\smbox}
\put(6,6){\numbox{2}}
\put(0,3){\smbox}
\put(3,3){\numbox{2}}
\put(0,0){\numbox{3}}
\put(10,0){$\Gamma_4$}
\end{picture}
$$

Hence the space of all embeddings corresponding to partition type $(\alpha,\beta,\gamma)$,
$$\mathbb V_{\alpha,\gamma}^\beta=\{f:N_\alpha\hookrightarrow N_\beta: \Cok f\cong N_\gamma\}
\subset \Hom_k(N_\alpha,N_\beta),$$
has five irreducible components, namely the 
closures $\overline{\mathbb V}_{\Gamma_x}$ where $x$ is 
one of the symbols in $\{1, 2, 3a, 3b, 4\}$.  

\medskip
We are interested in the boundary of the irreducible components. 
Write $$\Gamma\preceq_{\sf boundary}\widetilde\Gamma\quad\text{if}\quad
\mathbb V_{\widetilde\Gamma}\cap \overline{\mathbb V}_\Gamma\neq \emptyset.$$
Note that $\Gamma\preceq_{\sf boundary}\widetilde\Gamma$ and 
$\widetilde\Gamma\preceq_{\sf boundary}\Gamma$ cannot both hold
unless $\Gamma=\widetilde\Gamma$, see \cite{ks-mfo}.
It turns out that
the {\it boundary relation,} defined to be the  
transitive closure $\leq_{\sf boundary}$ of $\preceq_{\sf boundary}$,
is a geometrically motivated partial order on 
the set of LR-tableaux of type $(\alpha,\beta,\gamma)$.
We denote this partially ordered set by $\mathcal P_{\sf boundary}$.

\medskip
In this paper we provide a~tool to determine 
this poset.
\medskip

  We observe in Lemma~\ref{lemma-union-columns} that a
  LR-tableau can be realized by a direct sum
  of cyclic embeddings if and only if it is a union of columns
  with subsequent entries.
Suppose  $\Gamma$ and $\widetilde\Gamma$ are two LR-tableaux of the same type,
both unions of columns with subsequent entries, 
such that they differ in exactly two columns.
If the smallest entry occurs in a higher position in $\widetilde\Gamma$ then
we say that $\widetilde\Gamma$ {\it is obtained from} $\Gamma$ {\it by an
increasing box move,} see Section \ref{sec-box} for details.
\end{ex}

\medskip
In this situation we have the following result.

\begin{thm}\mylabel{theorem-box-boundary}
If $\widetilde\Gamma$ is obtained from $\Gamma$
by an increasing box move, then $\Gamma\prec_{\sf boundary}\widetilde\Gamma$.
\end{thm}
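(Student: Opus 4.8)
\emph{Proof sketch.}
The plan is to produce a one-parameter family $(f_t)_{t\in k}$ of embeddings $N_\alpha\hookrightarrow N_\beta$, depending polynomially on $t$ and lying entirely in $\mathbb V_{\alpha,\gamma}^\beta$, such that $f_t$ has LR-tableau $\Gamma$ for $t\neq 0$ and $f_0$ has LR-tableau $\widetilde\Gamma$. Indeed, the punctured curve $\{f_t:t\neq 0\}$ then lies in $\mathbb V_\Gamma$, so its closure — which contains $f_0$ — lies in the closed set $\overline{\mathbb V}_\Gamma$, whence $f_0\in\mathbb V_{\widetilde\Gamma}\cap\overline{\mathbb V}_\Gamma$ and $\Gamma\prec_{\sf boundary}\widetilde\Gamma$. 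Equivalently, one is constructing a degeneration $M\leadsto\widetilde M$ of the direct sums of cyclic embeddings realizing $\Gamma$ and $\widetilde\Gamma$ (which exist by Lemma~\ref{lemma-union-columns}); since $\mathbb V_{\alpha,\gamma}^\beta$ is $\Aut N_\alpha\times\Aut N_\beta$-stable and $\mathbb V_\Gamma$ is a union of orbits, $\overline{\mathbb V}_\Gamma$ contains the orbit closure of $M$, while $\widetilde M$ represents a point of $\mathbb V_{\widetilde\Gamma}$.

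To set things up, use Theorem~\ref{theorem-combinatorial-classification} to write $M=\bigoplus_jC_j$ and $\widetilde M=\bigoplus_j\widetilde C_j$ with indecomposable cyclic summands; since $\Gamma$ and $\widetilde\Gamma$ agree outside two columns and tableau formation for such direct sums is governed by the combinatorial data of that theorem, we may index the summands so that $C_j=\widetilde C_j$ for all $j$ outside a fixed finite set, the exceptional summands being exactly those responsible for the two differing columns and for the re-allocation of the ambient parts they affect. Realize both $M$ and $\widetilde M$ on one fixed pair $N_\alpha\hookrightarrow N_\beta$: pick generators $x_1,\dots$ of the cyclic summands of $N_\alpha$ and $e_1,\dots$ of those of $N_\beta$, describe $f_1$ (realizing $M$) by formulas $f_1(x_i)=p^{c_i}e_{\sigma(i)}$, and obtain $f_t$ by a \emph{box-move deformation}: for the generators $x_i$ belonging to the two columns that move, replace $p^{c_i}e_{\sigma(i)}$ by $p^{c_i}e_{\sigma(i)}+t\cdot p^{c_i'}e_{\sigma'(i)}$, the exponents $c_i'$ and the choice of $\sigma'$ being dictated by the torsion orders of the $x_i$ together with the requirement that the cokernel type stay $\gamma$. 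The hypothesis that the box move be \emph{increasing} — the smallest entry of the two columns occurring higher in $\widetilde\Gamma$ — is what pins down which columns may be moved and in which direction, and is exactly what makes the \emph{special} fibre $f_0$, rather than the generic one, carry the tableau $\widetilde\Gamma$.

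It then remains to verify: (a) $f_t$ is injective for every $t$; (b) $\Cok f_t\cong N_\gamma$ for every $t$; (c) $f_t$ has LR-tableau $\Gamma$ for every $t\neq 0$; and (d) $f_0$ realizes $\widetilde M$, hence has LR-tableau $\widetilde\Gamma$. Items (a) and (d) are routine checks, and (b) follows by an elementary Smith-normal-form computation with the explicit matrix of $f_t$ — this is the step where the particular shape of an increasing box move is used. The main obstacle is (c): one has to read off the LR-tableau of the deformed embedding $f_t$ — most conveniently from the subquotient description of the tableau, as $f_t$ itself need not be a direct sum of cyclic embeddings even though its tableau is realizable by one — and confirm that this tableau is $\Gamma$ for all $t\neq 0$, consistently with (b). Once (a)--(d) are in place, the first paragraph completes the proof.
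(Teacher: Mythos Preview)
Your overall strategy is the paper's: build a one-parameter family of embeddings whose generic member has tableau $\Gamma$ and whose special fibre has tableau $\widetilde\Gamma$ (this is exactly Theorem~\ref{theorem-one-parameter-family}), then conclude $\Gamma\prec_{\sf boundary}\widetilde\Gamma$ as in your first paragraph. So the architecture is right.

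Where your sketch goes wrong is in the concrete setup. The formula $f_1(x_i)=p^{c_i}e_{\sigma(i)}$ can only produce ``pickets'', i.e.\ cyclic embeddings whose generator lands in a single cyclic summand of $N_\beta$. A general pole $P((m_i))$ has generator $a=\sum_j p^{\ell_j}b_{\beta_j}$, a genuine sum over several summands of the ambient space; any column $C(e,f)_n$ with $e>1$ forces such a sum. Furthermore, the two orbits of the partial map that change under the box move typically pass through \emph{several} columns of $\Gamma$, not only the two that move, and the move can destroy or create gaps in the relevant height sequences. The paper handles this by passing to possibly \emph{extended} poles $R,R',\widetilde R,\widetilde R'$ (Section~\ref{sec-two-mono}), constructing explicit monomorphisms $\widetilde R\to R$ and $R'\to\widetilde R'$ with common cokernel $E_{(n-n')}$ (Proposition~\ref{prop-monomorphisms}), taking the pullback $Q$, and then writing down the family $Q(\mu)$ so that $Q(0)\cong\widetilde R\oplus\widetilde R'$ while $Q(\mu)\cong Q$ for $\mu\neq 0$. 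Your one-line ``add $t\cdot p^{c_i'}e_{\sigma'(i)}$'' does not survive contact with this structure.

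The decisive gap is your step~(c). You flag it as the main obstacle but propose no mechanism, and it is not a routine verification: the generic member $Q$ of the family is typically \emph{not} a direct sum of cyclic embeddings (the example after the pullback diagram in Section~\ref{sec-extension} exhibits an indecomposable non-cyclic summand), so one cannot simply read off its tableau from a pole decomposition. The paper devotes Proposition~\ref{prop-same-tableau} to this: using the filtrations of $B$ and $D$ centered at the submodule generators (Lemma~\ref{lemma-filtration}), it compares, layer by layer in $\mathcal H_\alpha/\mathcal H_{\alpha-1}$, the images of $p^\ell U+p^mH$ for the two subspaces $U=(a,c)\Lambda$ and $W=(r,s)\Lambda$, and shows these dimensions agree for all $\alpha,\ell,m$. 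Without an argument of this kind you cannot exclude that the generic fibre carries some third tableau, and the proof collapses.
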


In our example, any tableau is a~union of columns with subsequent entries.
Consider the first two tableaux, $\Gamma_1$ and $\Gamma_2$,
they differ in the second and third column.  More precisely, $\Gamma_2$ is obtained
from $\Gamma_1$ by exchanging the contents of those two columns in such a way that
the smallest entry is in a higher position in $\Gamma_2$, this is, $\Gamma_2$ is obtained from $\Gamma_1$ by an increasing
box move. Hence $\Gamma_1\prec_{\sf boundary}\Gamma_2$.
Further box moves yield relations corresponding to 
the remaining edges of the Hasse diagram of the poset $\mathcal P_{\sf boundary}$.

$$
\begin{picture}(20,30)
\put(-20,15){\makebox(0,0){$\mathcal P_{\sf boundary}:$}}
\put(10,0){\makebox(0,0){$\Gamma_1$}}
\put(10,10){\makebox(0,0){$\Gamma_{2}$}}
\put(0,20){\makebox(0,0){$\Gamma_{3a}$}}
\put(20,20){\makebox(0,0){$\Gamma_{3b}$}}
\put(10,30){\makebox(0,0){$\Gamma_4$}}
\put(13,13){\line(1,1){4}}
\put(7,13){\line(-1,1){4}}
\put(3,23){\line(1,1){4}}
\put(17,23){\line(-1,1){4}}
\put(10,3){\line(0,1){4}}
\end{picture}
$$

In general, the poset $\mathcal P_{\sf boundary}$ may be difficult to determine.  
If, however, the shape $\beta\setminus\gamma$ of the LR-tableaux is a horizontal
and vertical strip  then it turns out that any two tableaux in dominance partial
order can be transformed into each other by using increasing box moves
\cite{kst}, see Section \ref{sec-tableaux} for definitions.

\medskip
Hence we obtain:

\begin{cor}\mylabel{cor-rook-strip}
Suppose $\alpha$, $\beta$, $\gamma$ are partitions such that $\beta\setminus\gamma$
is a horizontal and vertical strip.
The following statements are equivalent for LR-tableaux $\Gamma$, $\widetilde\Gamma$
of type $(\alpha,\beta,\gamma)$.
\begin{enumerate}
\item $\widetilde\Gamma$ is obtained from $\Gamma$ by a sequence of increasing
  box moves.
\item $\Gamma\leq_{\sf boundary}\widetilde \Gamma$.
\item $\Gamma$ and $\widetilde\Gamma$ are in the dominance partial order.
\end{enumerate}
\end{cor}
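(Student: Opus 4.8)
The plan is to prove the cycle of implications $(1)\Rightarrow(2)\Rightarrow(3)\Rightarrow(1)$. The implication $(1)\Rightarrow(2)$ is essentially immediate from Theorem~\ref{theorem-box-boundary}: a single increasing box move gives $\Gamma\prec_{\sf boundary}\widetilde\Gamma$, and since $\leq_{\sf boundary}$ is by definition the reflexive and transitive closure of $\prec_{\sf boundary}$, a finite sequence of such moves yields $\Gamma\leq_{\sf boundary}\widetilde\Gamma$. One should note here that when $\beta\setminus\gamma$ is a horizontal and vertical strip, every LR-tableau of that shape is automatically a union of columns with subsequent entries (each column has at most one box), so the hypothesis of Theorem~\ref{theorem-box-boundary} is always met and the intermediate tableaux in the sequence are legitimate objects of $\mathcal P_{\sf boundary}$.

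For $(2)\Rightarrow(3)$, I would appeal to the fact already recorded in the introductory discussion: whenever $\Gamma\prec_{\sf boundary}\widetilde\Gamma$, the tableaux $\Gamma$ and $\widetilde\Gamma$ lie in the dominance partial order on LR-tableaux (this is the geometric input coming from degenerations of nilpotent operators with invariant subspaces). Since the dominance order on tableaux is a genuine partial order, it is reflexive and transitive, so it contains the reflexive–transitive closure $\leq_{\sf boundary}$ of $\prec_{\sf boundary}$; hence $\Gamma\leq_{\sf boundary}\widetilde\Gamma$ forces $\Gamma$ and $\widetilde\Gamma$ to be comparable in dominance, with $\Gamma$ below $\widetilde\Gamma$.

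The implication $(3)\Rightarrow(1)$ is the one that is not formal, and it is where I expect the real work — or rather, the real citation — to lie. This is precisely the combinatorial statement that, for a shape $\beta\setminus\gamma$ which is simultaneously a horizontal and a vertical strip (a ``rook strip''), any two LR-tableaux of that shape and content $\alpha$ that are comparable in the dominance order are connected by a sequence of increasing box moves, each intermediate step again being an LR-tableau. This is exactly the content of \cite{kst}, so the proof here would consist of invoking that result and checking that its hypotheses match ours. The main obstacle, were one to prove it from scratch, would be to show that the box moves can always be performed so as to \emph{strictly decrease} (in dominance) at each step while never leaving the set of LR-tableaux — i.e.\ that the poset of such tableaux under dominance has its covering relations realized by single box moves. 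Since the rook-strip condition guarantees each column of the tableau carries at most one entry, the relevant moves genuinely amount to swapping the contents of two columns, which keeps the combinatorics tractable; still, verifying the lattice-path/dominance bookkeeping is the substantive point, and here we simply defer to \cite{kst}.

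Finally I would close the loop by remarking that the three implications together give the equivalence, and that as a by-product the poset $\mathcal P_{\sf boundary}$ coincides with the dominance order on LR-tableaux in the rook-strip case, which (as in the example with $\beta\setminus\gamma$ a horizontal strip) makes $\mathcal P_{\sf boundary}$ completely explicit. \qed
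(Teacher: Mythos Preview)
Your proposal is correct and follows essentially the same route as the paper: the cycle $(1)\Rightarrow(2)\Rightarrow(3)\Rightarrow(1)$ via Theorem~\ref{theorem-box-boundary} (equivalently Corollary~\ref{cor-box-boundary}), the implication boundary $\Rightarrow$ dominance from \cite{ks-mfo}, and the combinatorial result of \cite{kst} that dominance is generated by increasing box moves in the rook-strip case. Your additional remark that the rook-strip hypothesis makes every tableau a union of one-box columns (so Theorem~\ref{theorem-box-boundary} applies at each step) is a helpful clarification that the paper leaves implicit.
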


Organization of the paper:

\medskip
First, in Section~\ref{sec-poles}, we give a combinatorial 
characterization of direct sums of cyclic embeddings
(Theorem~\ref{theorem-combinatorial-classification}),
generalizing the corresponding result by Kaplansky for poles.
The characterization is given in terms of partial maps on
LR-tableaux which are introduced in (\ref{sec-tableaux}).

\medskip
In Section \ref{sec-box}, we define the box-relation $\prec_{\sf box}$ 
on the set of LR-tableaux of the same shape. 
Proposition \ref{prop-simul-pole-decomp} 
shows how two tableaux in box relation
give rise to a simultaneous pole decomposition 
for the corresponding embeddings.

\medskip Suppose the two pole decompositions differ in two summands, 
say $R$ and $R'$ in one and $\widetilde R$ and
$\widetilde R'$ in the other embedding.  In Section \ref{sec-two-mono}, 
we define two monomorphisms, say 
$R'\to \widetilde R'$ and $\widetilde R\to R$, which 
have isomorphic cokernels (Proposition \ref{prop-monomorphisms}).
Let $Q$ be the pull-back of the cokernels of the two monomorphisms.
We show that $Q$ and $\widetilde R\oplus\widetilde R'$ have the
same LR-tableau.

\medskip
In the last Section \ref{sec-boundary} we assume that
$\Lambda$ is the power series ring with coefficients in an algebraically
closed field $k$.
Using the embedding $Q$ from the previous section,
we define a one-parameter family $Q(\mu)$, $\mu\in k$, 
of embeddings which is such that $Q(0)$, $R\oplus R'$ have the same
tableau (since the embeddings are isomorphic),
and so do $\widetilde R\oplus\widetilde R'$ and the $Q(\mu)$
for $\mu\neq 0$ (since $Q(\mu)\cong Q$ if $\mu\neq 0$).
We introduce the poset $\mathcal P_{\sf boundary}$
and use the one-parameter family to establish the two results
stated in the introduction about the boundary of the irreducible
components of invariant subspace varieties.

\section{Direct sums of cyclic embeddings}
\mylabel{sec-poles}

 Let $\Lambda$ be a discrete valuation ring with maximal ideal generator $p$
and radical factor field $k$.
By $\mod\Lambda$ we denote the category of all (finite length)
$\Lambda$-modules,
and by $\mathcal S(\Lambda)$ the category of all short exact sequences
in $\mod\Lambda$ with morphisms given by commutative diagrams.  
With the componentwise exact structure,
$\mathcal S(\Lambda)$ is an exact Krull-Remak-Schmidt category.
We denote objects in $\mathcal S(\Lambda)$ either as short exact sequences
$0\to A\to B\to C\to 0$ of $\Lambda$-modules, or as embeddings
$(A\subset B)$.

\subsection{Cyclic embeddings and poles}

\medskip
We review Kaplansky's classification of cyclic embeddings.

\medskip
An embedding $(A\subset B)$ is called {\it cyclic}
if the submodule $A$ is cyclic as a $\Lambda$-module,
that is, if $A_\Lambda$ is either indecomposable or zero.
A cyclic embedding $(A\subset B)$ 
is a {\it pole} if $A$ is an indecomposable $\Lambda$-module
and if $(A\subset B)$ is an indecomposable object in $\mathcal S(\Lambda)$.
An embedding of the form $(0\subset B)$ is called {\it empty.} 
By $E_\beta$ we denote the empty embedding
  $(0\subset N_\beta)$.

For the classification, we use height sequences which were   
originally introduced by Pr\"ufer
\cite{prue} as H\"ohenexponenten.   
In \cite[Section~18]{kap}, 
the height sequence for an~element $a\in B$ is called 
its {\it Ulm-sequence}.

\begin{defin}\begin{itemize}
  \item A {\it height sequence} is a~sequence in $\mathbb N_0\cup\{\infty\}$
      which is strictly increasing until it reaches $\infty$
      after finitely many steps.
    A~height sequence is {\it non-empty}, if it has at least
    one element from $\mathbb{N}_0$.
  \item We say an element $a\in B$, $B\in\mod\Lambda$, has {\it height} $m$
    if $a\in p^m B\setminus p^{m+1}B$.
    In this case we write $h(a)=m$.
    By definition, $h(0)=\infty$.
  \item   The {\it height sequence for $a$ in $B$} is 
    $H_B(a)=(h(p^ia))_{i\in\mathbb N_0}$.
    Sometimes, we do not write the trailing entries $\infty$.
  \end{itemize}
\end{defin}

The following result is proved in \cite[Theorem 25]{kap}.

\begin{prop}\mylabel{prop-poles}
There is a one-to-one correspondence
$$\{\text{poles}\}/_{\cong} \; \stackrel{1-1}\longleftrightarrow \;
\{\text{finite non-empty strictly increasing sequences in $\mathbb N_0$}\}.$$
\end{prop}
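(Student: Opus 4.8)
The plan is to classify poles by setting up a bijection between isomorphism classes of poles and finite non-empty strictly increasing sequences in $\mathbb N_0$; this is Kaplansky's classification of cyclic subgroups restricted to the indecomposable case, and the only real work is a ``minimal realization'' lemma.

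\emph{The two maps.} From a pole $(A\subset B)$: since $A$ is indecomposable it is isomorphic to $\Lambda/(p^n)$ for a unique $n\geq 1$; fixing a generator $a$ of $A$, I assign the finite part $\Psi(A\subset B)=(h(a),h(pa),\dots,h(p^{n-1}a))$ of the height sequence $H_B(a)$. These heights are finite (as $p^ia\neq 0$ for $i<n$), the sequence is strictly increasing (because $h(px)\geq h(x)+1$ whenever $px\neq 0$), and it is independent of the choice of $a$, since any other generator of $A$ is $ua$ with $u\in\Lambda$ a unit and $h(uy)=h(y)$ for units $u$; so $\Psi$ is well defined. Conversely, given $\sigma=(s_0<\cdots<s_{n-1})$ I construct an explicit pole $P_\sigma=(A_\sigma\subset B_\sigma)$: group $\sigma$ into its maximal runs of consecutive integers $R_1,\dots,R_r$, where $R_j=\{s_{l_j},s_{l_j}+1,\dots,s_{l_j}+m_j-1\}$, $l_1=0$, $l_{j+1}=l_j+m_j$ (so $l_r+m_r=n$); put $B_\sigma=\bigoplus_{j=1}^{r}\Lambda/(p^{\,m_j+s_{l_j}})$ with generators $g_1,\dots,g_r$, and $a_\sigma=\sum_{j=1}^{r}p^{\,s_{l_j}-l_j}g_j$ (the exponents $s_{l_j}-l_j\geq 0$ since $s_i\geq i$), $A_\sigma=\Lambda a_\sigma$. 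A direct computation shows that $a_\sigma$ has order $p^n$ and that, for an index $i$ lying in run $R_{j_0}$, one has $p^ia_\sigma=\sum_{j\geq j_0}p^{\,i+s_{l_j}-l_j}g_j$ with $h_{B_\sigma}(p^ia_\sigma)=\min_{j\geq j_0}(i+s_{l_j}-l_j)=(i-l_{j_0})+s_{l_{j_0}}=s_i$ -- the minimum being attained at $j=j_0$ because a gap between consecutive runs forces $s_{l_{j+1}}-l_{j+1}>s_{l_j}-l_j$. Thus $P_\sigma$ has height sequence $\sigma$.

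\emph{The key lemma and how the proposition follows.} The substance is the minimal-realization statement: \emph{if $\Lambda/(p^n)\cong A=\langle a\rangle$ is a submodule of a finite length module $M$ whose height sequence $H_M(a)$ has finite part $\sigma$, then $M$ has a direct summand $M_0\supseteq A$ with $(A\subseteq M_0)\cong P_\sigma$.} Granting this: (i) $P_\sigma$ is a pole, for if $B_\sigma=B'\oplus D$ with $A_\sigma\subseteq B'$ then $A_\sigma$ has the same height sequence $\sigma$ inside $B'$ (heights are inherited by direct summands), so the lemma gives $B'$ a summand isomorphic to $B_\sigma$, whence $\ell(B')\geq\ell(B_\sigma)=\ell(B')+\ell(D)$ and $D=0$; together with the indecomposability of $A_\sigma$ this makes $(A_\sigma\subset B_\sigma)$ indecomposable. (ii) $\Psi$ is a bijection with inverse $\sigma\mapsto P_\sigma$: we already have $\Psi(P_\sigma)=\sigma$, and if $(A\subset B)$ is any pole with $\Psi(A\subset B)=\sigma$, the lemma writes $(A\subset B)\cong(A\subseteq M_0)\oplus(0\subseteq D')$ with $(A\subseteq M_0)\cong P_\sigma$, so indecomposability forces $D'=0$ and $(A\subset B)\cong P_\sigma$; thus every pole with height sequence $\sigma$ is isomorphic to $P_\sigma$.

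\emph{Proof sketch of the lemma, and the main obstacle.} I would induct on $\ell(M)$. If $\sigma$ is a single run $(s_0,s_0+1,\dots,s_0+n-1)$, write $a=p^{s_0}b$ and -- the crucial point -- choose the lift $b$ so that in addition $h_M(p^jb)=j$ for $0\leq j<s_0$, i.e.\ so that its height sequence has no gap below level $s_0$; such a ``proper'' $b$ exists precisely because $h_M(a)$ equals $s_0$ exactly (Kaplansky's gap-correction: adjust $b$ successively modulo $p^{\,n+s_0}M$). Then $h_M(p^jb)=j$ for all $j<n+s_0$, which forces $\langle b\rangle\cong\Lambda/(p^{\,n+s_0})=B_\sigma$ to be a direct summand of $M$, and $(A\subseteq\langle b\rangle)\cong P_\sigma$. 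If $\sigma$ has $r\geq 2$ runs, peel off a cyclic direct summand $C\cong\Lambda/(p^{\,s_{n-1}+1})$ of $M$ carrying the socle generator $p^{n-1}a$ of $A$, chosen so that with $M=C\oplus M'$ and $\pi\colon M\to C$ the projection, $\pi(a)$ equals $p^{\,s_{n-1}-n+1}$ times a generator of $C$; then $a''=a-\pi(a)\in M'$ satisfies $\Lambda a''\cong\Lambda/(p^{\,n-m_r})$ and $H_{M'}(a'')$ has finite part $(s_0,\dots,s_{l_r-1})$, so the induction hypothesis supplies a direct summand $M_0'\subseteq M'$ realizing the corresponding pole, and $M_0=C\oplus M_0'$ works because $\Lambda/(p^{\,s_{n-1}+1})\oplus B_{(s_0,\dots,s_{l_r-1})}\cong B_\sigma$ compatibly with the embeddings. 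I expect the main obstacle to be exactly this purification step -- extracting $C$ (resp.\ the proper lift $b$) as a genuine direct summand of $M$ whose projection absorbs precisely the extreme part of $a$; this is the technical core of Kaplansky's classification of cyclic subgroups, which I would either reproduce in the present notation or simply cite.
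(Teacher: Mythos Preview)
Your proposal is correct and follows the same overall architecture as the paper: construct the explicit pole $P_\sigma$ from a height sequence, verify it has the right height sequence, and show any pole with that sequence is isomorphic to it. The difference lies in the uniqueness step. The paper isolates a short ``gap lemma'' (if the height sequence of $a$ in $B$ has a gap after $m_\ell$, then $N_{(m_\ell+1)}$ is a direct summand of $B$), uses it to see that the ambient space of $P_\sigma$ sits as a direct summand of any $B$ containing an element with sequence $\sigma$, and then invokes Kaplansky's full transitivity theorem \cite[Theorem~24]{kap} to carry one generator to the other; indecomposability of $P_\sigma$ also follows from the gap lemma, since the constructed $B_\sigma$ has exactly one indecomposable summand per gap. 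You instead package everything into a single ``minimal realization lemma'' and prove it by induction, peeling off cyclic summands run by run. Your route is more self-contained but reproves the technical core of Kaplansky's classification (and, as you note, the purification step in the inductive argument is where the real work hides); the paper's route is shorter and more modular because it cites Kaplansky for the hard part.
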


For the convenience of the reader we construct a~cyclic embedding $(A\subset B)$ 
for a given strictly increasing
sequence $(m_i)_{0\leq i\leq n}$ in $\mathbb N_0$. Following \cite{kap}, we say that 
$(m_i)$ has a {\it gap} after $m_\ell$ if $\ell=n$ or $m_{\ell+1}>m_\ell+1$.
Let $i_1>i_2>\cdots>i_s$ be  such that 
$(m_i)$  has gaps exactly after the entries 
$m_{i_1}>m_{i_2}>\cdots>m_{i_s}$.
For $1\leq j\leq s$ put $\beta_j=m_{i_j}+1$
and $\ell_j=m_{i_j}-i_j$, then $\beta$ and $\ell$ are strictly decreasing
sequences of positive and nonnegative integers, respectively.  
Let $$B=N_\beta=\bigoplus_{i=1}^s \Lambda/(p^{\beta_i})$$
be generated by elements $b_{\beta_j}$ of order $p^{\beta_j}$.
Let $a=\sum_{j=1}^s p^{\ell_j}\cdot b_{\beta_j}$ and put $A=(a)$. 
This yields a cyclic embedding $P((m_i))=(A\subset B)$.

\begin{ex}
  The height sequence $(1,3,4)$ has gaps after $1$ and $4$, and
  hence gives rise to the embedding
  $P((1,3,4))=((p^2b_5+pb_2)\subset N_{(5,2)})$.
  $$
  \raisebox{.5cm}{$P((1,3,4)):$} \quad
\begin{picture}(8,15)
  \multiput(0,0)(0,3)5{\smbox}
  \multiput(3,6)(0,3)2{\smbox}
  \multiput(1.5,7.5)(3,0)2\sbullet
  \put(1.5,7.5){\line(1,0)3}
\end{picture}
$$
In the picture, the columns correspond to the indecomposable direct 
summands of $B$; the $i$-th box from the top in a column of length $r$
represents the element $p^{i-1}b_r$.  The columns are aligned vertically
to make the submodule generator(s) homogeneous, if possible.
\end{ex}

  \begin{rem}
    Each indecomposable cyclic embedding is either a pole of the form
    $P((m_i))$ for $(m_i)$ a finite non-empty strictly increasing
    sequence in $\mathbb N_0$ or else an empty embedding
    of the form $E_{(n)}$ for $n$ a natural number.
  \end{rem}

\bigskip

We define {\it an~extended pole} as an~embedding that is the direct sum of a~pole and a~certain 
indecomposable empty embedding.
Extended poles arise when 
due to a ``box move'' some subsequent entries in the height sequence
are increased or decreased in such a way that a gap
disappears.

\begin{lem}\mylabel{lemma-non-gap}
Suppose that for a given height sequence $(m_i)$, the gaps are listed
in the subsequence $(m_{i_j})$,
$j=1,\ldots,s$, as above, 
but there is one non-gap included in this subsequence, 
say at $m_{i_u}$, for some $u\in\{1,\ldots,s\}$.
Let $\delta_j=m_{i_j}+1$, $k_j=m_{i_j}-i_j$, $D=N_\delta$ with
generators $d_{\delta_j}$ of order $p^{\delta_j}$  
for $j=1,\ldots,s$, 
and $c=\sum_{j=1}^s p^{k_j}d_{\delta_j}$, as above,
and denote this embedding as $P((m_i)\vee m_{i_u})=((c)\subset D)$. 

\smallskip
Then $P((m_i)\vee m_{i_u})\cong P((m_i))\oplus
E_{(\delta')}$
where $\delta'=\delta_{i_u}=m_{i_u}+1$.
\end{lem}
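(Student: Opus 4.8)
The plan is to read off the isomorphism type of the cyclic embedding $P((m_i)\vee m_{i_u})=((c)\subset D)$ from Proposition~\ref{prop-cyclics}. That proposition tells us $((c)\subset D)\cong P(H_D(c))\oplus(0\subset N_{\beta'})$ for a uniquely determined height sequence $H_D(c)$ and partition $\beta'$, so it suffices to compute $H_D(c)$ and then to identify $\beta'$.

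For the height sequence I would write $p^ic=\sum_{j=1}^s p^{i+k_j}d_{\delta_j}$. Since $d_{\delta_j}$ has order $p^{\delta_j}$ and $i+k_j=i+m_{i_j}-i_j$, the $j$-th component is nonzero exactly when $i\le i_j$, in which case its height in $\Lambda/(p^{\delta_j})$ equals $i+m_{i_j}-i_j$; as the height of an element of a direct sum is the minimum of the heights of its components, this gives $h(p^ic)=\min\{\,i+m_{i_j}-i_j:i_j\ge i\,\}$ when the index set is nonempty and $h(p^ic)=\infty$ otherwise. Because the gap after $m_n$ is among the listed entries we have $i_1=n$, so the index set is nonempty precisely for $0\le i\le n$. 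I then claim this minimum is $m_i$. The inequality $i+m_{i_j}-i_j\ge m_i$ for $i_j\ge i$ is immediate from strict monotonicity of $(m_i)$ (a run of $i_j-i$ strictly increasing steps from $m_i$ to $m_{i_j}$ climbs by at least $i_j-i$). For attainment, I would pick $j^\ast$ with $i_{j^\ast}$ minimal among those $\ge i$: since every genuine gap index lies in $\{i_1,\dots,i_s\}$ and none lies strictly between $i$ and $i_{j^\ast}$, there is no gap after any of $m_i,m_{i+1},\dots,m_{i_{j^\ast}-1}$, hence $m_{i_{j^\ast}}-m_i=i_{j^\ast}-i$ and the value $m_i$ is realised. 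Thus $H_D(c)=(m_i)_{0\le i\le n}$, which is exactly the defining height sequence of the pole $P((m_i))$.

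It remains to identify $\beta'$. From $((c)\subset D)\cong P((m_i))\oplus(0\subset N_{\beta'})$ and passing to ambient spaces, $N_\delta=D\cong N_\beta\oplus N_{\beta'}$, where $N_\beta$ is the ambient space of $P((m_i))$, whose parts are the $m_{i_j}+1$ for $j$ ranging over the genuine gap positions $\{1,\dots,s\}\setminus\{u\}$. Since $\delta$ arises from $\beta$ by inserting the single extra part $\delta_u=m_{i_u}+1$, one has $N_\delta=N_\beta\oplus\Lambda/(p^{\delta_u})$ as $\Lambda$-modules, and cancelling $N_\beta$ by Krull--Remak--Schmidt in $\mod\Lambda$ forces $N_{\beta'}\cong\Lambda/(p^{\delta_u})$, i.e.\ $\beta'=(\delta')$ with $\delta'=\delta_u=m_{i_u}+1$. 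This is precisely the asserted decomposition.

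I expect the main obstacle to be the exact evaluation of $h(p^ic)$ --- in particular verifying that the minimum over the surviving components is \emph{attained} at $m_i$, not merely bounded below by it; this is the step that uses the hypothesis that all genuine gaps of $(m_i)$ are included among $m_{i_1},\dots,m_{i_s}$. Once $H_D(c)=(m_i)$ is in hand, the remainder is a formal application of the classification in Proposition~\ref{prop-cyclics} together with cancellation of finite-length modules.
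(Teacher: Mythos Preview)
Your proof is correct and takes a genuinely different route from the paper's own argument. The paper simply writes down an explicit $\Lambda$-isomorphism $D\to B\oplus N_{(\delta')}$ (a change of generators that ``merges'' the redundant summand corresponding to the non-gap into the adjacent one) and checks that $c$ is carried to the pole generator $a$. Your approach instead computes the height sequence $H_D(c)$ directly, shows it coincides with $(m_i)$, and then invokes the classification in Proposition~\ref{prop-cyclics} together with Krull--Remak--Schmidt cancellation on the ambient spaces to pin down the residual empty summand. The paper's version is more hands-on and produces the isomorphism explicitly (useful if one later needs the map, as in the constructions of Sections~\ref{sec-two-mono}--\ref{sec-extension}); your version is cleaner conceptually and makes transparent exactly where the hypothesis ``all genuine gaps are among the $m_{i_j}$'' is used --- namely in the attainment step for the minimum. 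Both are short and complete.
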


\begin{proof}
  Write $P((m_i))= ((a)\subset B)$ for the pole constructed
    as below Proposition~\ref{prop-poles}.
  The isomorphism in Lemma~\ref{lemma-non-gap}
  is given explicitly by the map $D\to B\oplus N_{(\delta')}$:
  $$d_{\delta_j}\mapsto \left\{\begin{array}{ll}
  b_{\delta_j}&\text{if $j\neq i_u,i_v$}\\
  d'&\text{if $j=i_u$}\\
  b_{\delta_j}-d'&\text{if $j=i_v$} \end{array}\right.$$
  if $i_v$ is the gap directly following $i_u$ (so $i_v<i_u$)
  and $N_{(\delta')}$ is generated by $d'$.
\end{proof}

\begin{ex}
  The height sequence $(1,3,4)$ has no gap after $3$, so we may consider
  the extended pole $P((1,3,4)\vee 3) = ((p^2d_5+ p^2d_4+pd_2)\subset N_{(5,4,2)})$:
  $$
  \raisebox{.5cm}{$P((1,3,4)\vee 3):$} \quad
\begin{picture}(11,15)
  \multiput(0,0)(0,3)5{\smbox}
  \multiput(3,3)(0,3)4{\smbox}
  \multiput(6,6)(0,3)2{\smbox}
  \multiput(1.5,7.5)(3,0)3\sbullet
  \put(1.5,7.5){\line(1,0)6}
\end{picture}
\qquad
  \raisebox{.5cm}{$\cong P((1,3,4))\oplus (0\subset N_{(4)}):$} \quad
\begin{picture}(8,15)
  \multiput(0,0)(0,3)5{\smbox}
  \multiput(3,6)(0,3)2{\smbox}
  \multiput(1.5,7.5)(3,0)2\sbullet
  \put(1.5,7.5){\line(1,0)3}
\end{picture}
\raisebox{.5cm}{$\oplus$}\;
\begin{picture}(3,15)
  \multiput(0,1.5)(0,3)4{\smbox}
\end{picture}
$$
\end{ex}

\subsection{Partitions, tableaux and partial maps} 

\mylabel{sec-tableaux}

We introduce the combinatorial tools needed in this paper:
partitions, tableaux, and partial maps.

\medskip
An embedding $(A\subset B)$ with corresponding short exact
sequence $0\to A\to B\to C\to 0$ gives rise to three partitions
$\alpha,\beta,\gamma$ which describe the isomorphism types of 
$A,B,C$, respectively.
Recall that the partition $\alpha=(\alpha_1,\ldots,\alpha_r)$
(where $\alpha_1\geq\cdots\geq\alpha_r\geq1$) corresponds to the 
$\Lambda$-module $N_\alpha=\bigoplus_{i=1}^r\Lambda/(p^{\alpha_i})$.

\medskip
The Green-Klein Theorem \cite{klein1} states that a partition triple
$(\alpha,\beta,\gamma)$ can be realized in this way by a short exact 
sequence if and only if there exists an LR-tableau
of shape $\beta\setminus\gamma$ and content $\alpha$.
By definition, an {\it LR-tableau} is a Young diagram of shape $\beta$
(the {\it outer shape} of the tableau) in which 
the region $\beta\setminus\gamma$ is filled with $(\alpha')_1$ entries
1, $(\alpha')_2$ entries 2, etc., where $\alpha'$ denotes the transpose 
of $\alpha$, such that three conditions are satisfied:
In each row, the entries are weakly increasing; in each column, the entries
are strictly increasing; and the lattice permutation property holds, that is,
on the right hand side of each column there occur at least as many entries
$e$ as there are entries $e+1$ ($e=1,2,\ldots$).

\medskip
The theorem is constructive in the sense that for each LR-tableau $\Gamma$
there is an embedding $(A\subset B)$ with corresponding tableau $\Gamma$.
The tableau of an embedding is obtained as follows.

\medskip

Suppose that the submodule $A$ has Loewy
length $r$.  Then the sequence of epimorphisms
$$B=B/p^rA\twoheadrightarrow B/p^{r-1}A\twoheadrightarrow \cdots\twoheadrightarrow B/pA
\twoheadrightarrow B/A$$
gives rise to a sequence of inclusions of partitions
$$\beta=\gamma^{(r)}\supset\gamma^{(r-1)}\supset\cdots
  \supset\gamma^{(1)}\supset\gamma^{(0)}=\gamma,$$
where the $e$-th partition records the lengths of the 
indecomposable summands of $B/p^eA$.
The partitions define the corresponding LR-tableau
$\Gamma=[\gamma^{(0)}, \gamma^{(1)},\ldots,\gamma^{(r)}]$ as follows.
The Young diagram of shape $\beta=\gamma^{(r)}$ is filled in such a way
that the region given by $\gamma=\gamma^{(0)}$ remains empty and 
every box in the skew diagram
$\gamma^{(e)}\setminus\gamma^{(e-1)}$ carries the entry $\singlebox e$.
The {\it shape} of the tableau is the skew diagram $\beta\setminus\gamma$,
the {\it content} is the partition $\alpha$.

\medskip
The {\it union} of two tableaux is taken row-wise, so if
$E=[\varepsilon^{(i)}]_{0\leq i\leq s}$ and $F=[\zeta^{(i)}]_{0\leq i\leq t}$ 
are tableaux then
the partitions $\gamma^{(i)}$ for $\Gamma=E\cup F$ are given by taking
the union (of ordered multi-sets):  
$\gamma^{(i)}=\varepsilon^{(i)}\cup\zeta^{(i)}$ 
where $\varepsilon^{(i)}=\varepsilon^{(s)}$
for $i\geq s$ and $\zeta^{(i)}=\zeta^{(t)}$ for $i\geq t$.

\smallskip
It is easy to see that if embeddings $(A\subset B)$, $(C\subset D)$
have tableaux $E$, $F$, respectively, then the direct sum
$(A\subset B)\oplus (C\subset D)$ has tableau 
$\Gamma=E\cup F$.

\medskip
We can also decompose some tableaux into column 
tableaux:

\smallskip
{\it Column tableaux} (which we call {\it columns} if the context is clear) 
are tableaux which have only one column, and where the entries
in this column are subsequent natural numbers (not necessarily starting at 1).
Each column is one of the following: 
For $1\leq e\leq f$, we denote by $C(e,f)_n$ 
the 1-column tableau of height $n$ with entries $e,\ldots,f$.
Formally, $C(e,f)_n=[\gamma^{(0)},\ldots,\gamma^{(f)}]$ where
$\gamma^{(0)}=\cdots=\gamma^{(e-1)}=(n-f+e-1), \gamma^{(e)}=(n-f+e),\ldots,\gamma^{(f)}=(n)$.  
By $C(1,0)_n$ we denote the empty column of length $n$.
Note that a column tableau $C(e,f)_n$ satisfies
the lattice permutation property, and hence is an LR-tableau,
  if and only if $e=1$.

\begin{ex}
In a union of column tableaux, the vertical columns
need no longer form  column tableaux.
  $$
  \begin{picture}(3,9)
    \multiput(0,0)(0,3)3{\smbox}
    \put(0,6){\numbox1}
    \put(0,3){\numbox2}
    \put(0,0){\numbox3}
  \end{picture} \quad
  \raisebox{.35cm}{$\cup$} \quad
  \begin{picture}(3,9)
    \multiput(0,3)(0,3)2{\smbox}
    \put(0,3){\numbox1}
  \end{picture}
  \quad\raisebox{.35cm}{$=$} \quad
  \begin{picture}(6,9)
    \multiput(0,0)(0,3)3{\smbox}
    \multiput(3,3)(0,3)2{\smbox}
    \put(3,6){\numbox1}
    \put(3,3){\numbox2}
    \put(0,3){\numbox1}
    \put(0,0){\numbox3}
  \end{picture}
  $$
\end{ex}

\begin{defin}
  A {\it partial map} $g$ on an LR-tableau $\Gamma$ assigns to each box
  $\singlebox e$ with $e>1$ a box $\singlebox d$ with entry $d=e-1$ such that
  \begin{enumerate}
  \item $g$ is one-to-one,
  \item for each box $b$, the row of $g(b)$ is above the row of $b$. 
  \end{enumerate}
    An {\it orbit} of a partial map $g$ is a sequence 
    $\singlebox e, g(\singlebox e), g^2(\singlebox e),\ldots
    , g^{e-1}(\singlebox e)$ given by a box $\singlebox e$ 
      which is not in the image of $g$.
      Thus, the orbits of $g$ are in one-to-one correspondence with
      the boxes $\singlebox e$ not in the image of $g$ (the correspondence
      is given by taking the first box in the orbit),
      and in one-to-one correspondence with the boxes that have entry 1
      (where the correspondence is given by taking the last box).
\end{defin}

\begin{rem}
  Given an LR-tableau $\Gamma$, the existence of at least one partial map
  follows from the lattice permutation property.
\end{rem}

Given a partial map $g$ on an LR-tableau $\Gamma$, a {\it jump} in row $r$ 
is a box $b$ in this row with the property that either $b$ 
has entry $\singlebox 1$ or that $g(b)$ occurs in row $s$ with $s<r-1$. 
We say a partial map $g$ on $\Gamma$ has the {\it empty box property (EBP)}
if for each $r$, there are at least as many columns in 
$\Gamma$ of exactly $r-1$ empty boxes as there are jumps in row $r$.

\begin{ex}
Consider the following LR-tableau
$$
 \raisebox{.5cm}{$\Gamma:$} \quad
  \begin{picture}(12,12)
    \multiput(0,0)(0,3)4{\smbox}
    \multiput(3,3)(0,3)3{\smbox}
    \multiput(6,6)(0,3)2{\smbox}
    \put(9,9){\smbox}
    \put(9,9){\numbox1}
    \put(6,9){\numbox1}
    \put(6,6){\numbox2}
    \put(0,3){\numbox1}
    \put(3,3){\numbox2}
    \put(0,0){\numbox3}
  \end{picture}
$$  
To define a~partial map, we have to assign to each box 
$\singlebox 3$ a corresponding box $\singlebox 2$ and  to each box $\singlebox 2$ 
a corresponding box $\singlebox 1$. 
Note that we can do this in four different ways. 
To specify the maps, we distinguish boxes 
of the same entry by indicating the row on the right;
for the two boxes in the first row, we use the letters
 L and R.
Consider the partial map $g$ defined as follows:
$$g:\boxwithrow34\mapsto\boxwithrow23,\; \boxwithrow23\mapsto \boxwithrow1L,\;
\boxwithrow22\mapsto \boxwithrow1R$$
Note that the partial map $g$ has three orbits
(one consists only of the box $\boxwithrow 13$).
It satisfies (EBP) since there is a column of 2 empty boxes
corresponding to the jump $\boxwithrow23\mapsto \boxwithrow 11$.
  However, the partial map
$$g': \boxwithrow34\mapsto \boxwithrow22,\; \boxwithrow23\mapsto \boxwithrow1L,\;
\boxwithrow22\mapsto \boxwithrow1R$$
does not satisfy (EBP), because there is no column of 3 empty boxes
corresponding to the jump at $\boxwithrow 34\mapsto \boxwithrow22$.

\end{ex}
 
  \begin{lem}
    \mylabel{lemma-tableau-of-cyclic}
    Suppose the embedding $(A\subset B)$ is cyclic with $A$ a module
    of Loewy length $r$.
    The height sequence $(m_i)$
    of the submodule generator $a$ 
    determines the rows in $\Gamma$ in which
    the entries occur, and conversely.  
    More precisely,
      for $1\leq e\leq r$,\
    the entry $e$ occurs
      once, namely
    in row $m_{e-1}+1$.
      Moreover, a~tableau with this property is a union of column tableaux.
  \end{lem}

  \begin{proof}
    For any embedding, the number of boxes in the first $m$ rows of
    $\gamma^{(e)}$
    is the length of $(B/p^eA)/((p^mB+p^eA)/p^eA)=B/(p^eA+p^mB)$.
    Here, the modules $B/(p^{e-1}A+p^mB)$ and $B/(p^eA+p^mB)$
    have the same length
    if $m\leq m_{e-1}$ and lengths differing by one otherwise
    since $p^{e-1}a\in p^{m_{e-1}}B\setminus p^{m_{e-1}+1}B$.
    The last statement it easy to deduce.
  \end{proof}

 \subsection{Direct sums of cyclic embeddings}

\mylabel{sec-sums-cyclic}

  The classification of cyclic embeddings by Kaplansky
  can be generalized.
  An algebraic description of the multiplicity of a certain pole
  as a direct summand of a direct sum of cyclic embeddings
  is given in \cite[in particular Lemma~1 and Theorem~2]{hrw}.
  In this section, we present a combinatorial description
  in terms of partial maps on Littlewood-Richardson tableaux.
  We will revisit this topic in the upcoming paper \cite{ks-cyclic}.

\begin{defin}\begin{itemize}  
  \item
    Two partial maps $g$, $g'$ on an LR-tableau $\Gamma$ are {\it equivalent}
    if $g'=\pi^{-1}\circ g\circ \pi$ for some permutation $\pi$ 
    on the set of non-empty  boxes  in $\Gamma$
    which preserves the entry and the row.
  \item Two pairs $(\Gamma,g)$, $(\Delta, h)$, each consisting of an
    LR-tableau and a partial map on the tableau, are {\it equivalent}
    if $\Gamma=\Delta$ and if the partial maps $g,h$ are equivalent.
    In this case we write $(\Gamma,g)\sim(\Delta,h)$.
  \end{itemize}
\end{defin}

 \begin{ex}
  Consider the LR-tableau from the example in Section \ref{sec-tableaux}.
$$
 \raisebox{.5cm}{$\Gamma:$} \quad
  \begin{picture}(12,12)
    \multiput(0,0)(0,3)4{\smbox}
    \multiput(3,3)(0,3)3{\smbox}
    \multiput(6,6)(0,3)2{\smbox}
    \put(9,9){\smbox}
    \put(9,9){\numbox1}
    \put(6,9){\numbox1}
    \put(6,6){\numbox2}
    \put(0,3){\numbox1}
    \put(3,3){\numbox2}
    \put(0,0){\numbox3}
  \end{picture}
$$  
The maps $g$ and $g'$ are not equivalent.  
But $g$ is equivalent to the partial map
$$h: \singlebox3\mapsto\boxwithrow23, \;\boxwithrow23\mapsto \boxwithrow1R,\;
\boxwithrow22\mapsto \boxwithrow1L.$$
(There is a fourth partial map on $\Gamma$, it is equivalent to $g'$.)
 \end{ex}

\medskip
Clearly, given two equivalent partial maps on an LR-tableau,
then one satisfies (EBP) if and only if the other does.\medskip

Here is our generalization of Kaplansky's result.

\begin{thm}\mylabel{theorem-combinatorial-classification}
  There is a one-to-one correspondence
    $$\big\{\bigoplus\text{cyclics}\big\} \big /_{\cong}
    \quad \stackrel{1-1}\longleftrightarrow \quad
    \big\{(\Gamma,g):\text{$g$ partial map on $\Gamma$ with (EBP)}\big\}
    \big /_{\sim}$$
  between the isomorphism types
  of direct sums of cyclic embeddings, and the equivalence classes
  of pairs $(\Gamma,g)$ where $\Gamma$ is an LR-tableau
  and $g$ a partial map on $\Gamma$ which satisfies (EBP).
\end{thm}

  \begin{proof}
    We describe this correspondence explicitely.
    
    \smallskip
    Consider a direct sum of cyclic embeddings as the sum
    of an empty embedding and a direct sum of poles.
    The height sequence of each pole gives rise to a partial map
    on the tableau of the pole which satisfies (EBP),
    this map has precisely one orbit; it records the isomorphism
    type of the pole (Lemma~\ref{lemma-tableau-of-cyclic}).
    The tableau of the direct sum is given by taking the row-wise union
    of the tableaux of the summands; it admits a partial map which is 
    (boxwise) defined by the partial maps for the summands; this map satisfies
    (EBP) because the restricted maps do. 
    Given two isomorphic direct sum decompositions, the two unordered lists of
    height sequences of the poles involved are equal, hence the 
    two associated partial maps differ by conjugation by a
    permutation of the boxes
    which preserves rows and entries.

    \smallskip
    Conversely, suppose $g$ is a partial map on $\Gamma$ with (EBP).
    For each orbit $\mathcal O$ of $g$, the boxes in $\mathcal O$ together
    with the empty parts of columns which correspond to the jumps 
    (recall the definition of the (EBP)), 
    constitute the tableau of a pole.  
    The empty parts of columns in $\Gamma$
    not used by the (EBP) define the tableau of an empty embedding. 
    Since all boxes are accounted for, $\Gamma$ is the union of all the 
    tableaux.  Hence the sum of the poles and the empty embedding 
    has $\Gamma$ as its tableau.
    If $g'$ is an equivalent partial map on $\Gamma$, then the boxes in the 
    orbits may differ, but not the rows in which they occur.
    Thus the corresponding
    unordered list of height sequences of poles is equal, 
    and so is the partition
    which records those empty parts of columns which are not used up
    by the jumps in the height sequences.  
    Hence the associated embeddings are isomorphic.
    
    \smallskip
    The two assignments are inverse to each other since this is true for 
    poles and empty embeddings.
  \end{proof}

There is a different way to read off from a given LR-tableau
if it is the tableau of a direct sum of cyclic embeddings.

\begin{lem}
\mylabel{lemma-union-columns}
An LR-tableau $\Gamma$ is the tableau of 
a direct sum of cyclic embeddings if and only if $\Gamma$ is
a union of columns.
\end{lem}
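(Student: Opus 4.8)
The plan is to prove the two implications separately, using the combinatorial classification of Theorem~\ref{theorem-combinatorial-classification} for one direction and a direct construction for the other. For the forward implication, suppose $\Gamma$ is the tableau of a direct sum of cyclic embeddings. By Proposition~\ref{prop-tableau-of-cyclic}, part 3, the tableau of each cyclic summand is itself a union of columns (with disjoint entries); since the union operation on tableaux is defined row-wise and is clearly associative and commutative, a row-wise union of unions of columns is again a union of columns. Hence $\Gamma$ is a union of columns. This direction is essentially bookkeeping once one unwinds the definition of the union of tableaux from Section~\ref{sec-tableaux}.

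For the converse, suppose $\Gamma$ is a union of columns, say $\Gamma = C_1 \cup \cdots \cup C_m$ where each $C_i$ is a column tableau $C(e_i,f_i)_{n_i}$ in the notation of Section~\ref{sec-tableaux}, possibly an empty column $C(1,0)_{n_i}$. The idea is to realize each column tableau, or an appropriate grouping of columns, as the tableau of a cyclic embedding, and then invoke the fact (recorded just before the column discussion in Section~\ref{sec-tableaux}) that the direct sum of embeddings has tableau the row-wise union of the tableaux of the summands. A single column $C(e,f)_n$ with $e=1$ satisfies the lattice permutation property on its own and, having at most one box $\singlebox 1$, is the tableau of a cyclic embedding by Corollary~\ref{cor-tableaux}, part 1. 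The subtlety is that columns with $e>1$ do \emph{not} satisfy the lattice permutation property in isolation, so one cannot handle each column separately; one must group the columns of $\Gamma$ into blocks, each of which is the tableau of a single pole or extended pole. Concretely, because $\Gamma$ as a whole satisfies (LR), one can match each column whose smallest entry $e$ exceeds $1$ with columns lying (weakly) to its right that supply the required smaller entries, assembling a sub-union of columns whose entries, read appropriately, form a single height sequence; by Proposition~\ref{prop-poles} and Lemma~\ref{lemma-non-gap} such a height sequence is realized by a pole or extended pole, whose tableau (by Proposition~\ref{prop-tableau-of-cyclic}) is exactly that sub-union of columns.

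I would organize this as follows: first treat the trivial direction; then, for the converse, set up the grouping of columns by an induction on the number of columns with smallest entry $>1$, at each step peeling off a pole or extended pole corresponding to a maximal height sequence compatible with $\Gamma$, using Lemma~\ref{lemma-gap} to see the gaps in the height sequence correspond exactly to the columns of the peeled-off piece; finally take the direct sum of all the pieces (including empty embeddings $E_{(n)}$ for any leftover empty columns) and observe that its tableau is the row-wise union, which reconstitutes $\Gamma$. The main obstacle is the bookkeeping in the converse: one must verify that the greedy matching of smaller entries to larger ones respects the "row of $g(b)$ above the row of $b$" constraint and the (EBP), i.e. that the grouping really does produce legitimate height sequences of poles/extended poles rather than merely formal column unions; here the cleanest route is probably to observe that the data of "$\Gamma$ is a union of columns" is equivalent, via Theorem~\ref{theorem-combinatorial-classification}, to $\Gamma$ admitting a partial map $g$ with (EBP) all of whose orbits lie within single original columns—and then the partial map $g$ itself dictates the grouping, so no separate combinatorial argument is needed. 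That observation, once made precise, collapses the converse into a direct appeal to Theorem~\ref{theorem-combinatorial-classification}.
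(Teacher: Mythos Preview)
Your proposal is correct and, in its final form, takes the same approach as the paper: the forward direction via Proposition~\ref{prop-tableau-of-cyclic}(3), and the converse by producing a partial map with (EBP) from the column decomposition and invoking Theorem~\ref{theorem-combinatorial-classification}. The paper's proof goes straight to this last observation without the intermediate detour through explicit column-grouping and pole-peeling; your discussion of grouping columns into height sequences is essentially re-deriving the content of Theorem~\ref{theorem-combinatorial-classification} rather than using it, so once you make the final observation you can discard the inductive scaffolding entirely.
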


  \begin{proof}
    In Lemma~\ref{lemma-tableau-of-cyclic} 
    we have seen that  the tableau of a cyclic embedding  is 
    a union of columns, and this property is preserved under taking
    direct sums.
    
    \smallskip
    Conversely, if the tableau is a union of columns, then the 
    lattice permutation property allows for the construction of a 
    partial map with (EBP).  The result follows from 
    Theorem~\ref{theorem-combinatorial-classification}.
  \end{proof}

We illustrate presented results and notation with examples.

\begin{ex}
In general, the embedding corresponding to a given LR-tableau
is not determined uniquely, up to isomorphy, by the tableau alone.
In each case, the following embeddings will have the same given LR-tableau.

\medskip
(1) Two nonisomorphic sums of poles, given by nonequivalent partial maps.
  $$
  \raisebox{.5cm}{$\Gamma:$} \;
  \begin{picture}(8,12)
    \multiput(0,0)(0,3)4{\smbox}
    \multiput(3,6)(0,3)2{\smbox}
    \put(6,9){\numbox1}
    \put(3,6){\numbox1}
    \put(0,0){\numbox2}
  \end{picture}
  \quad
  \raisebox{.5cm}{$P((1,3))\oplus P((0)):$} \;
  \begin{picture}(12,12)
    \multiput(0,0)(0,3)4{\smbox}
    \multiput(3,3)(0,3)2{\smbox}
    \multiput(1.5,4.5)(3,0)2\sbullet
    \put(1.5,4.5){\line(1,0)3}
    \put(9,3){\smbox}
    \put(10.5,4.5)\sbullet
  \end{picture}
  \quad
  \raisebox{.5cm}{$P((0,3))\oplus P((1)):$} \;
  \begin{picture}(12,12)
    \multiput(0,0)(0,3)4{\smbox}
    \multiput(3,3)(0,3)1{\smbox}
    \multiput(1.5,4.5)(3,0)2\sbullet
    \put(1.5,4.5){\line(1,0)3}
    \multiput(9,3)(0,3)2{\smbox}
    \put(10.5,4.5)\sbullet
  \end{picture}
  $$

(2) A sum of two poles vs.\ a sum of two poles and an empty embedding.
  $$
  \raisebox{.5cm}{$\Gamma:$} \;
  \begin{picture}(8,9)
    \multiput(0,0)(0,3)3{\smbox}
    \multiput(3,6)(0,3)1{\smbox}
    \put(6,6){\numbox1}
    \put(3,3){\numbox1}
    \put(0,0){\numbox2}
  \end{picture}
  \quad
  \raisebox{.5cm}{$P((0,2))\oplus P((1)):$} \;
  \begin{picture}(12,9)
    \multiput(0,0)(0,3)3{\smbox}
    \multiput(3,3)(0,3)1{\smbox}
    \multiput(1.5,4.5)(3,0)2\sbullet
    \put(1.5,4.5){\line(1,0)3}
    \multiput(9,3)(0,3)2{\smbox}
    \put(10.5,4.5)\sbullet
  \end{picture}
  \quad
  \raisebox{.35cm}{\parbox{.23\textwidth}{%
      $P((1,2))\oplus P((0))$\\ $\phantom x\quad\oplus E_{(2)}:$}} 
  \;
  \begin{picture}(15,9)
    \multiput(0,0)(0,3)3{\smbox}
    \multiput(6,3)(0,3)1{\smbox}
    \multiput(1.5,4.5)(3,0)1\sbullet
     \multiput(12,3)(0,3)2{\smbox}
    \put(7.5,4.5)\sbullet
  \end{picture}
  $$

(3) The sum of poles is determined uniquely by the partial map; 
but there is also an embedding which is not a sum of poles.
  $$
  \raisebox{.5cm}{$\Gamma:$} \;
  \begin{picture}(8,12)
    \multiput(0,0)(0,3)4{\smbox}
    \multiput(3,3)(0,3)3{\smbox}
    \multiput(6,6)(0,3)2{\smbox}
    \put(6,9){\numbox1}
    \put(6,6){\numbox2}
    \put(3,3){\numbox1}
    \put(0,0){\numbox3}
  \end{picture}
  \quad
  \raisebox{.5cm}{$P((0,1,3))\oplus P((2)):$} \;
  \begin{picture}(12,12)
    \multiput(0,0)(0,3)4{\smbox}
    \multiput(3,3)(0,3)2{\smbox}
    \multiput(1.5,7.5)(3,0)2\sbullet
    \put(1.5,7.5){\line(1,0)3}
    \multiput(9,3)(0,3)3{\smbox}
    \put(10.5,4.5)\sbullet
  \end{picture}
  \quad
  \raisebox{.5cm}{$E\oplus E_{(3)}:$} \;
  \begin{picture}(12,12)
    \multiput(0,0)(0,3)4{\smbox}
    \multiput(3,3)(0,3)2{\smbox}
    \multiput(1.5,7.5)(3,0)2\sbullet
    \put(4.5,4.5)\sbullet
    \put(1.5,7.5){\line(1,0)3}
    \multiput(9,3)(0,3)3{\smbox}
  \end{picture}
  $$

(4) Two partial maps, up to equivalence, only one has the (EBP).
  $$
  \raisebox{.5cm}{$\Gamma:$} \;
  \begin{picture}(12,12)
    \multiput(0,0)(0,3)4{\smbox}
    \multiput(3,3)(0,3)3{\smbox}
    \multiput(6,6)(0,3)2{\smbox}
    \put(9,9){\smbox}
    \put(9,9){\numbox1}
    \put(6,9){\numbox1}
    \put(6,6){\numbox2}
    \put(0,3){\numbox1}
    \put(3,3){\numbox2}
    \put(0,0){\numbox3}
  \end{picture}
  \quad
  \raisebox{.5cm}{\parbox{.23\textwidth}{%
      $P((0,1,3))$\\ $\phantom x\quad \oplus P((2))$\\
      $\phantom x\quad \oplus P((0,1)):$}}
  \begin{picture}(18,12)
    \multiput(0,0)(0,3)4{\smbox}
    \multiput(3,3)(0,3)2{\smbox}
    \multiput(1.5,7.5)(3,0)2\sbullet
    \put(1.5,7.5){\line(1,0)3}
    \multiput(9,3)(0,3)3{\smbox}
    \put(10.5,4.5)\sbullet
    \multiput(15,3)(0,3)2{\smbox}
    \put(16.5,7.5)\sbullet
  \end{picture}
  \quad
  \raisebox{.5cm}{$E\oplus P((0,2)):$} \;
  \begin{picture}(15,12)
    \multiput(0,0)(0,3)4{\smbox}
    \multiput(3,3)(0,3)2{\smbox}
    \multiput(1.5,7.5)(3,0)2\sbullet
    \put(4.5,4.5)\sbullet
    \put(1.5,7.5){\line(1,0)3}
    \multiput(9,3)(0,3)3{\smbox}
    \put(12,6){\smbox}
    \multiput(10.5,7.5)(3,0)2\sbullet
    \put(10.5,7.5){\line(1,0)3}
  \end{picture}
  $$

(5) Two partial maps with (EBP), up to equivalence.
  $$
  \raisebox{.5cm}{$\Gamma:$} \;
  \begin{picture}(12,12)
    \multiput(0,0)(0,3)4{\smbox}
    \multiput(3,3)(0,3)3{\smbox}
    \multiput(6,3)(0,3)3{\smbox}
    \put(9,9){\smbox}
    \put(9,9){\numbox1}
    \put(6,6){\numbox1}
    \put(3,3){\numbox2}
    \put(6,3){\numbox2}
    \put(0,0){\numbox3}
  \end{picture}
  \quad
 \raisebox{.5cm}{\parbox{.23\textwidth}{%
      $P((0,2,3))$\\ $\phantom x\quad \oplus P((1,2))$\\
      $\phantom x\quad \oplus E_{(3)}:\;$}}
  \begin{picture}(18,12)
    \multiput(0,0)(0,3)4{\smbox}
    \multiput(3,6)(0,3)1{\smbox}
    \multiput(1.5,7.5)(3,0)2\sbullet
    \put(1.5,7.5){\line(1,0)3}
    \multiput(9,3)(0,3)3{\smbox}
    \put(10.5,7.5)\sbullet
    \multiput(15,3)(0,3)3{\smbox}
  \end{picture}
  \quad
 \raisebox{.5cm}{\parbox{.23\textwidth}{%
      $P((1,2,3))$\\ $\phantom x\quad \oplus P((0,2))$\\
      $\phantom x\quad \oplus E_{(3)}:\;$}}
  \begin{picture}(12,12)
    \multiput(0,0)(0,3)4{\smbox}
    \put(1.5,7.5){\sbullet}
    \multiput(6,3)(0,3)3{\smbox}
    \put(9,6){\smbox}
    \multiput(7.5,7.5)(3,0)2\sbullet
    \put(7.5,7.5){\line(1,0)3}
    \multiput(15,3)(0,3)3{\smbox}
  \end{picture}
  $$

\end{ex}

\begin{defin}
An element $a\in B$ defines a filtration for $B$, as follows.
Let $\mathcal F_0=\End(B)\cdot a$ and, for $i\in\mathbb Z\setminus\{0\}$, 
put $\mathcal F_i=p^{-i}\mathcal F_0$.
We call $(\mathcal F_i)_i$ the {\it filtration of $B$ centered at $a$}.
\end{defin}

Clearly, the filtration 
$\cdots\subset \mathcal F_{-1}\subset 
\mathcal F_0\subset\mathcal F_{1}\subset\cdots$ 
has factors which are
semisimple $\Lambda$-modules, and $a\in\mathcal F_i$ for $i\geq 0$.

\begin{lem}\mylabel{lemma-filtration}
Suppose $a=\sum p^{\ell_j} b_{\beta_j}$ is the submodule generator of a pole
or extended pole.
\begin{enumerate}
\item The elements $p^{\ell_j} b_{\beta_j}$ form a minimal generating set 
  for the $\Lambda$-module $\mathcal F_0$.
\item For $\alpha\in\mathbb Z$, 
  the $k$-vector space $\mathcal F_\alpha/\mathcal F_{\alpha-1}$
  has basis given by the residue classes of the elements 
  $p^{\ell_j-\alpha} b_{\beta_j}$ where $j$ is such that 
  $\ell_j-\beta_j< \alpha\leq \ell_j$.
\item The residue class of the submodule generator $a$ is
  homogeneous in $\mathcal F_0/\mathcal F_{-1}$ in the sense that it is 
  the sum of the basic elements in 2.
\item Similarly, for $\alpha>0$, the residue class of $p^\alpha a$ 
  is a homogeneous element in $\mathcal F_{-\alpha}/\mathcal F_{-\alpha-1}$.
\end{enumerate}
\end{lem}

\begin{proof}
1. For a pole $((a)\subset B)$ with $a=\sum p^{\ell_j} b_{\beta_j}$,
the sequences $\ell_j$ and $k_j=\beta_j-\ell_j$ are strictly decreasing,
hence a term $p^{\ell_u} b_{\beta_u}$ occurs in exactly one summand 
in $\End(B)\cdot a=\sum(\rad^{\ell_j}B\cap\soc^{k_j}B)$.
If the pole is extended, then the sequence $k_j$ is still strictly 
decreasing, but $\ell_j$ is stationary at one place, say $\ell_v=\ell_{v-1}$.
The corresponding summand $p^{\ell_v} b_{\beta_v}$ occurs in exactly two
terms in the expression $\End(B)\cdot a=\sum(\rad^{\ell_j}B\cap\soc^{k_j}B)$,
in each case, it is not in the radical.

2. The second statement follows from the first.

3.\ and 4.  The last two statements follow from the first two.
\end{proof}

\section{Box moves}
\mylabel{sec-box}

We are interested in the ``transition'' between 
the collections of embeddings which are given by two different but ``similar'' LR-tableaux.
In this section we show that whenever the two tableaux differ by a so called box move, 
then they can be realized by embeddings which have ``compatible'' decompositions
as direct sums of cyclic embeddings.

\begin{defin}\mylabel{def-box-move}
Suppose that $\Gamma$, $\widetilde\Gamma$ are LR-tableaux 
of the same shape and content, and
that both are unions 
of column tableaux
in such a way that they differ in exactly two of those columns.
 We say $\widetilde\Gamma$ is obtained from $\Gamma$ by an {\it increasing  box move}
 if these two columns where they differ are of the form
 $C=C(e,f)_n$, $C'=C(e',f')_{n'}$ in $\Gamma$ 
 and $\widetilde C=C(e',f')_n$, $\widetilde C'=C(e,f)_{n'}$
 in $\widetilde\Gamma$, for suitable  
numbers $n>n'$, $e<e'$, $f<f'$ such that $f-e=f'-e'$.
  That is, the columns $C$ and $C'$ contain the same number
  of subsequent entries,
  but column $C$ is longer and contains the smaller entries.
  After the increasing box move, column $\widetilde C$ has the
  same length as $C$ but the entries are taken from $C'$;
  similarly, $\widetilde C'$ has the length of $C'$ but the entries from $C$.
  Thus, in an increasing box move, the smaller entries move up.
  By reversing an increasing box move we obtain
  a {\it decreasing box move;} here, the smallest entry of the two
  affected columns moves towards a lower position.
We write $\Gamma\leq_{\sf box}\widetilde\Gamma$ where $\leq_{\sf box}$ denotes the reflexive and
transitive closure of the relation given by increasing box moves.
\end{defin}

\begin{ex}
1. The name {\it box move} originates from dealing with LR-tableaux which are
horizontal and vertical strips; the 
box move is simply given by exchanging two boxes in the tableau.  
In the example in the introduction, $\Gamma_2$ is obtained from $\Gamma_1$ by an increasing box move.
$$\raisebox{4mm}{$\Gamma_1:$} \quad
\begin{picture}(15,12)(0,3)
\multiput(0,12)(3,0)4{\smbox}
\put(12,12){\numbox{1}}
\multiput(0,9)(3,0)3{\smbox}
\put(9,9){\numbox{2}}
\multiput(0,6)(3,0)2{\smbox}
\put(6,6){\numbox{3}}
\put(0,3){\smbox}
\put(3,3){\numbox{1}}
\put(0,0){\numbox{2}}
\end{picture}
\quad\raisebox{4mm}{$<_{\sf box}$} \;\;\;
\raisebox{4mm}{$\Gamma_{2}:$} \quad
\begin{picture}(15,12)(0,3)
\multiput(0,12)(3,0)4{\smbox}
\put(12,12){\numbox{1}}
\multiput(0,9)(3,0)3{\smbox}
\put(9,9){\numbox{2}}
\multiput(0,6)(3,0)2{\smbox}
\put(6,6){\numbox{1}}
\put(0,3){\smbox}
\put(3,3){\numbox{3}}
\put(0,0){\numbox{2}}
\end{picture}
$$
(We exchanged entries $1$ and $3$ in columns $2$ and $3$.)

2. Both tableaux $\Gamma=C(1,2)_5\cup C(3,4)_4\cup C(1,2)_2$ and
$\widetilde\Gamma=C(3,4)_5\cup C(1,2)_4\cup C(1,2)_2$ are unions 
of columns (the first in two different ways).
$\widetilde\Gamma$ is obtained from $\Gamma$ by an increasing box move:  
$\Gamma<_{\sf box}\widetilde\Gamma$.
  $$
  \raisebox{.5cm}{$\Gamma:$} \quad
  \begin{picture}(9,15)
    \multiput(0,0)(0,3)5{\smbox}
    \multiput(3,3)(0,3)4{\smbox}
    \multiput(6,9)(0,3)2{\smbox}
    \put(6,12){\numbox1}
    \put(6,9){\numbox2}
    \put(3,6){\numbox3}
    \put(3,3){\numbox4}
    \put(0,3){\numbox1}
    \put(0,0){\numbox2}
  \end{picture}
  \quad
  \raisebox{.5cm}{$\widetilde\Gamma:$} \quad
  \begin{picture}(9,15)
    \multiput(0,0)(0,3)5{\smbox}
    \multiput(3,3)(0,3)4{\smbox}
    \multiput(6,9)(0,3)2{\smbox}
    \put(6,12){\numbox1}
    \put(6,9){\numbox2}
    \put(3,6){\numbox1}
    \put(3,3){\numbox3}
    \put(0,3){\numbox2}
    \put(0,0){\numbox4}
  \end{picture}
  $$
\end{ex}

Our aim is to show:

\begin{prop}\mylabel{prop-simul-pole-decomp}
Suppose $\Gamma$, $\widetilde\Gamma$ are LR-tableaux 
of the same shape and content such that
$\widetilde\Gamma$ is obtained from $\Gamma$ by an increasing box move.
Then each of the tableaux $\Gamma$, $\widetilde\Gamma$ is 
the tableau of a sum of poles and an empty embedding.
These direct sum decompositions differ, up to isomorphy and reordering of
the summands, in exactly two poles and perhaps in the empty embeddings.
\end{prop}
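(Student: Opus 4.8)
The plan is to exploit Lemma~\ref{lemma-union-columns} together with the refined description of an increasing box move given in the Remark following Definition~\ref{def-box-move}. First I would record that, since $\Gamma$ is a union of columns, Lemma~\ref{lemma-union-columns} gives an embedding which is a direct sum of cyclic embeddings with tableau $\Gamma$; the same applies to $\widetilde\Gamma$. By Proposition~\ref{prop-cyclics} each such direct sum is, up to isomorphy, a sum of poles together with one empty embedding, so the first assertion is immediate. The work is entirely in making the two decompositions ``compatible,'' i.e.\ showing they can be chosen to agree outside two of the poles.

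The key step is to fix a single partial map $g$ on the columns shared by $\Gamma$ and $\widetilde\Gamma$ and extend it appropriately across the two columns that differ. Write the two differing columns of $\Gamma$ as $C(e,f)_n$ and $C(e',f')_{n'}$ with $n>n'$, $e<e'$, $f<f'$, $f-e=f'-e'$, and those of $\widetilde\Gamma$ as $C(e',f')_n$ and $C(e,f)_{n'}$. The union of columns $\Gamma$ carries a partial map $g$ satisfying (EBP) (the lattice permutation property provides one, as in the proof of Lemma~\ref{lemma-union-columns}); I would choose $g$ so that it maps no box of the ``new'' columns into a box of an ``old'' column and vice versa unless forced — concretely, arrange that the orbits of $g$ split into those entirely inside the shared columns and those meeting the two special columns. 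Then Theorem~\ref{theorem-combinatorial-classification} translates $(\Gamma,g)$ into a direct sum of poles (one per orbit) plus an empty embedding, and likewise for $\widetilde\Gamma$ with an analogously chosen $\widetilde g$. Because $g$ and $\widetilde g$ are literally the same on the shared columns, the poles coming from orbits confined to those columns are identical for $\Gamma$ and $\widetilde\Gamma$; only the poles (and possibly the empty summand, since the ``jumps'' absorbing empty columns may shift) arising from orbits that touch the two exchanged columns can differ.

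Finally I would count: the two special columns together contribute a bounded amount of ``material'' to the partial map — at most the boxes in those two columns and the empty parts of those two columns — so the orbits meeting them, hence the poles built from them, number at most two on each side; since $\Gamma$ and $\widetilde\Gamma$ have the same shape and content, the total content is preserved, forcing the two lists of such poles to have the same number of boxes, and a parity/length argument (using that $f-e=f'-e'$ and that the heights $n,n'$ are just swapped between the two columns) shows there are exactly two such poles on each side. Everything else — the remaining poles and, up to the empty-column bookkeeping, the empty embedding — coincides. This yields the stated conclusion.

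The main obstacle will be the bookkeeping of the empty parts of columns under the (EBP): a jump in an orbit touching the special columns may claim an empty column that, in the other tableau, is instead left over for the empty embedding, so I must verify that the empty summands of the two decompositions differ only in a controlled way (the phrase ``perhaps in the empty embeddings'' in the statement), and that this discrepancy is entirely attributable to the two exchanged columns rather than leaking into the shared part. I expect this to require a careful but elementary case analysis according to whether the gap structure of the height sequences attached to the two special poles changes — i.e.\ whether a gap is created or destroyed, turning a pole into an extended pole in the sense of Lemma~\ref{lemma-non-gap} — when passing from $\Gamma$ to $\widetilde\Gamma$.
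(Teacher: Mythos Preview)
Your overall strategy matches the paper's: build partial maps $g$ on $\Gamma$ and $\widetilde g$ on $\widetilde\Gamma$, invoke Theorem~\ref{theorem-combinatorial-classification}, and read off poles from orbits. The gap is the assertion that $g$ and $\widetilde g$ can be taken ``literally the same on the shared columns.'' They cannot, in general. An orbit through a special column necessarily leaks into the shared region at the boundary entries: if $e>1$ the box $\singlebox e$ in $C(e,f)_n$ must map to some $\singlebox{e-1}$ lying in the shared part, and conversely some box $b'$ with entry $f+1$ may be forced to map to the $\singlebox f$ at the bottom of a special column. For that $b'$ the target sits in row~$n$ for $\Gamma$ but in row~$n'$ for $\widetilde\Gamma$, so $g(b')\neq\widetilde g(b')$ even though $b'$ is a shared box (already in the example $\Gamma_1\prec_{\sf box}\Gamma_2$ from the Introduction, the shared box $\singlebox 2$ in row~$5$ must map to the special $\singlebox 1$, which lies in different rows in the two tableaux). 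Your own caveat ``unless forced'' concedes this and so undermines the later ``literally the same'' step. Once $g$ and $\widetilde g$ disagree on shared boxes, you have no control over how many orbits change or whether the shared tails of the two special orbits coincide---and that coincidence is exactly what is used downstream in Section~\ref{sec-two-mono}.

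The paper fills this gap by an explicit entry-by-entry construction. For each boundary entry $x\in\{e,e',f+1,f'+1\}$ it first defines whichever of $g_x,\widetilde g_x$ is more constrained---namely the one for the tableau whose special $\singlebox x$ (or special target $\singlebox{x-1}$) sits in the \emph{higher} row, so that the lattice permutation property guarantees a target that is then automatically valid for the other tableau as well---and obtains the second map by redirecting a single value. This yields two valid partial maps that differ in exactly two orbits, with the shared portions of those two orbits identical by construction; the (EBP) and the empty-column bookkeeping then follow for free. Your counting sketch and unspecified ``parity/length argument'' do not substitute for this construction, and you would additionally need to exclude the possibility that both special columns lie in a single orbit.
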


\begin{proof}
  Suppose $\widetilde\Gamma$ is obtained from $\Gamma$ by an
    increasing box move.  By definition, the two tableaux are unions of column
    tableaux which differ in columns of type 
    $C(e,f)_n$, $C(e',f')_{n'}$ for $\Gamma$ and $C(e',f')_n$, $C(e,f)_{n'}$ for
    $\widetilde\Gamma$, where $n>n'$ and $e<e'$.
    We identify the complements $\Gamma\setminus(C(e,f)_n\cup C(e',f')_{n'})$
    and $\widetilde\Gamma\setminus(C(e',f')_n\cup C(e,f)_{n'})$.
    
    \smallskip
    Our key tool is Theorem~\ref{theorem-combinatorial-classification}.
    Since $\Gamma$, $\widetilde\Gamma$ are LR-tableaux for direct sums of cyclic embeddings,
    there exist partial maps, $h$ on $\Gamma$ and $\widetilde h$ on $\widetilde\Gamma$,
    which both satisfy the (EBP).
    We define new partial maps, $g$ on $\Gamma$ and $\widetilde g$ on $\widetilde\Gamma$
    which also satisfy the (EBP) and which, in addition, differ in exactly two orbits.

    \smallskip
    The two orbits where $g$ and $\widetilde g$ differ are as follows.
    The orbits of $g$ which contain the non-empty boxes in $C(e,f)_n$ and in
    $C(e',f')_{n'}$, correspond 
    to the orbits of $\widetilde g$ which contain the non-empty boxes 
    in $C(e,f)_{n'}$
    and in $C(e',f')_n$, respectively.
    
    \smallskip
    We define the new partial maps entry by entry on the boxes in the columns.
    
    \smallskip
    For a partial map $g$ and $x\geq 2$, denote by $g_x$ the restriction of the domain of $g$
    to boxes of the form $\singlebox x$. Clearly, the map $g$ is given by the sequence of its 
    restrictions $(g_x)_{x\geq 2}$.
    Depending on the subscript $x$, we use either the map $h_x$ 
    to define first  $g_x$, and then $\widetilde g_x$;
    or else we use the map ${\widetilde h}_x$
    to define first $\widetilde g_x$, and then $g_x$.

    \smallskip
    For an entry $x$ different from $e$, $e'$, $f+1$, $f'+1$,
    put $g_x=h_x$.
    For boxes $b$ with entry $x$, put
    $$\widetilde g_x(b)=\left\{\begin{array}{ll}
    \text{the box above $b$} & \text{if}\; b\in C(e',f')_n\cup C(e,f)_{n'}\\
    g_x(b) & \text{otherwise.}\\
    \end{array}\right.$$

    \smallskip
    If $e>1$, define $\widetilde g_e=\widetilde h_e$.
    Let $b'$ be the box $\singlebox e$
    in $C(e,f)_{n'}$.  For boxes $b$ with entry $e$ put
    $$g_e(b)=\left\{\begin{array}{ll}
    \widetilde g_e(b') & \text{if $b$ is the box $\singlebox e$ in $C(e,f)_n$}\\
    \widetilde g_e(b) & \text{otherwise.}
    \end{array}\right.$$
    (Since $n>n'$, the box $b$ with entry $e$ in $C(e,f)_n$
    is lower in $\Gamma$ than the box $b'$
    in $\widetilde \Gamma$.  Hence we pick $\widetilde g_e$ first
    and then define $g_e(b)=\widetilde g_e(b')$.)

    \smallskip
    Next, define $\widetilde g_{f'+1}=\widetilde h_{f'+1}$.
    There is a unique box $b'$ such that $\widetilde g_{f'+1}(b')$ is
    the box $\singlebox{f'}$ in $C(e',f')_{n}$.  For a box $b$ with entry $f'+1$ define
    $$g_{f'+1}(b)=\left\{\begin{array}{ll}
    \text{the box $\singlebox{f'}$ in $C(e',f')_{n'}$} & \text{if $b=b'$}\\
    \widetilde g_{f'+1}(b) & \text{otherwise.}
    \end{array}\right.$$
    
    Assume first that $e'\neq f+1$.  Define $g_{e'}=h_{e'}$.  Let $b'$ be the box 
    $\singlebox{e'}$ in $C(e',f')_{n'}$.  For boxes $b$ with entry $e'$ put
    $$\widetilde g_{e'}(b)=\left\{\begin{array}{ll}
    g_e(b') & \text{if $b$ is the box $\singlebox{e'}$ in $C(e',f')_n$} \\
    \text{the box above $b$} & \text{if $e<e'<f$ and $b\in C(e,f)_{n'}$}\\
    g_e(b) & \text{otherwise.}\\
    \end{array}\right.$$
    
    Define $g_{f+1}=h_{f+1}$.  There is a unique box $b'$ such that $g_{f+1}(b')$ is 
    the box $\singlebox f$ in $C(e,f)_n$.
    For boxes $b$ with entry $f+1$ define
    $$\widetilde g_{f+1}(b)=\left\{\begin{array}{ll} 
    \text{the box $\singlebox{f}$ in $C(e,f)_{n'}$} & \text{if}\; b=b'\\
    \text{the box above $b$} & \text{if}\; b\in C(e',f')\setminus\singlebox{e'}\\
    g_{f+1}(b) & \text{otherwise}.
    \end{array}\right.$$
    
    It remains to deal with the case $e'=f+1$.  Define $g_{e'}=h_{e'}$.
    There is a unique box $b'$ 
    such that $g_{e'}(b')$ is the box $\singlebox f$ in $C(e,f)_n$.
    Let $b''$ be the box $\singlebox{e'}$ in $C(e',f')_{n'}$.
    For a box $b$ with entry $e'$ define
    $$\widetilde g_{e'}(b)=\left\{\begin{array}{ll}
    \text{the box $\singlebox f$ in $C(e,f)_{n'}$} & \text{if $b=b'$} \\
    g_{e'}(b'') & \text{if $b$ is the box $\singlebox {e'}$ in $C(e',f')_n$}\\
    g_{e'}(b) & \text{otherwise.}
    \end{array}\right.$$

    The (EBP) is satisfied for the maps $h$, $\widetilde h$ by assumption.
    In our construction we have been careful to make sure that the (EBP)
    also holds for the modified maps $g$ and $\widetilde g$.
    It is easy to see that the maps $g$, $\widetilde g$ differ in exactly two orbits.
    The direct sum decompositions corresponding to $g$ and $\widetilde g$ given by
    Theorem~\ref{theorem-combinatorial-classification} hence differ, up to isomorphy
    and reordering, in exactly two poles and perhaps in the empty embeddings.
    (See the proof of Theorem ~\ref{theorem-combinatorial-classification}).
\end{proof}

\begin{ex}
In the above example, the poles are given by the orbits of the maps
$g$ and $\tilde g$:
For $\Gamma$, the orbit $\boxwithrow25\to \boxwithrow14$ yields the pole
$P=P((3,4))$, the orbit $\boxwithrow44\to\boxwithrow33\to\boxwithrow22\to\boxwithrow11$
yields $P'=P((0,1,2,3))$.  $\Gamma$ is the tableau for
the direct sum of those two poles and the empty embedding $E_{(2)}$.

\smallskip
For $\widetilde\Gamma$, the orbit $\boxwithrow24\to\boxwithrow13$ yields
$\widetilde P=P((2,3))$, while the long orbit gives rise to
$\widetilde P'=P((0,1,3,4))$.
The tableau for the direct sum of these two poles is $\widetilde\Gamma$.

\smallskip
Note that there are monomorphisms
$\widetilde P\to P$ and $P'\oplus E_{(2)}\to \widetilde P'$ which
both have cokernel $E_{(1)}$.
The monomorphisms are studied in more detail in the next
  section.

  $$
  \raisebox{.5cm}{$\Gamma:$} \quad
  \begin{picture}(9,15)
    \multiput(0,0)(0,3)5{\smbox}
    \multiput(3,3)(0,3)4{\smbox}
    \multiput(6,9)(0,3)2{\smbox}
    \put(6,12){\numbox1}
    \put(6,9){\numbox2}
    \put(3,6){\numbox3}
    \put(3,3){\numbox4}
    \put(0,3){\numbox1}
    \put(0,0){\numbox2}
  \end{picture}
  \quad
  \raisebox{.5cm}{$P:$} \quad
  \begin{picture}(3,15)
    \multiput(0,0)(0,3)5{\smbox}
    \put(1.5,4.5)\sbullet
  \end{picture}
  \quad
  \raisebox{.5cm}{$P'\oplus E_{(2)}\cong P((0,1,2,3)\vee 1):$} \quad
  \begin{picture}(19.5,15)
    \multiput(0,0)(0,3)4{\smbox}
    \put(1.5,10.5)\sbullet
    \multiput(4.5,6)(0,3)2{\smbox}
    \put(9,6){$\cong$}
    \multiput(13.5,0)(0,3)4{\smbox}
    \multiput(16.5,6)(0,3)2{\smbox}
    \multiput(15,10.5)(3,0)2\sbullet
    \put(15,10.5){\line(1,0)3}
  \end{picture}
  $$
  $$
  \raisebox{.5cm}{$\widetilde\Gamma:$} \quad
  \begin{picture}(9,15)
    \multiput(0,0)(0,3)5{\smbox}
    \multiput(3,3)(0,3)4{\smbox}
    \multiput(6,9)(0,3)2{\smbox}
    \put(6,12){\numbox1}
    \put(6,9){\numbox2}
    \put(3,6){\numbox1}
    \put(3,3){\numbox3}
    \put(0,3){\numbox2}
    \put(0,0){\numbox4}
  \end{picture}
  \qquad
  \raisebox{.5cm}{$\widetilde P:$} \quad
  \begin{picture}(3,15)
    \multiput(0,0)(0,3)4{\smbox}
    \put(1.5,4.5)\sbullet
  \end{picture}
  \qquad
  \raisebox{.5cm}{$\widetilde P':$} \quad
  \begin{picture}(19.5,15)
    \multiput(0,0)(0,3)5{\smbox}
    \multiput(3,6)(0,3)2{\smbox}
    \multiput(1.5,10.5)(3,0)2\sbullet
    \put(1.5,10.5){\line(1,0)3}
  \end{picture}
  $$

\end{ex}

\section{An extension of two cyclic embeddings}
\mylabel{sec-two-mono}

  A increasing box move on the LR-tableau $\Gamma$
  gives rise to two monomorphisms with
  isomorphic cokernels.  The main result states that
  the pull-back of the two cokernel maps
  yields an embedding $Q$ with LR-tableau $\Gamma$.
This prepares our study of some geometric properties of 
invariant subspace varieties in Section~\ref{sec-boundary}.

\subsection{Two monomorphisms}

Suppose that the LR-tableau $\widetilde\Gamma$
is obtained from $\Gamma$ by an increasing box move.
By the definition, there are
natural numbers $n>n'$, $e<e'$, $f<f'$ with $f-e=f'-e'$ such that
$\Gamma$, $\widetilde\Gamma$ are both unions of columns, such that 
$\widetilde\Gamma$ is obtained from $\Gamma$ by modifying two columns:
\begin{eqnarray*} \text{In $\Gamma$:} & C=C(e,f)_n, & C'=C(e',f')_{n'};\\
\text{in $\widetilde\Gamma$:} & \widetilde C=C(e,f)_{n'}, & 
  \widetilde C'=C(e',f')_n\end{eqnarray*}

Suppose that the columns are parts of the LR-tableaux 
of (possibly extended) poles $R$, $R'$, $\widetilde R$, $\widetilde R'$,
respectively.  We show in this section that there are monomorphisms
$\widetilde R\to R$, $R'\to \widetilde R'$ which both have cokernel 
$E_{(n-n')}$.

\medskip
More precisely, in the proof of Proposition~\ref{prop-simul-pole-decomp}
we have constructed partial maps $g$, $\widetilde g$, which differ
in exactly the two orbits which correspond to those columns.  
Hence there are poles with the following properties:
\begin{align*}
  P &= P((m_i)),  &    m_{e-1} &= n-f+e,\ldots, m_{f-1}=n \\
  P' &= P((m'_i)), &  m'_{e'-1} &= n'-f+e, \ldots, m'_{f'-1}=n'\\
  \widetilde P &= P((\widetilde m_i)),
  & \widetilde m_{e-1}&= n'-f+e,\ldots,\widetilde m_{f-1}=n'\\
  & & & \text{and}\; \widetilde m_i=m_i \;\text{for}\; i\not\in \{e-1,\ldots,f-1\}\\
  \widetilde P' &= P((\widetilde m'_i)),
  & \widetilde m'_{e'-1} &= n-f+e,\ldots,\widetilde m'_{f'-1}=n\\
  & & & \;\text{and}\; \widetilde m'_i=m'_i\; \text{for}\; i\not\in\{e'-1,\ldots,f'-1\}
\end{align*}

\smallskip
Comparing the height sequences $(m_i)$, $(\widetilde m_i)$,
we see that $\widetilde m_i=m_i-n+n'$ for $i\in\{e-1,\ldots,f-1\}$
but $\widetilde m_i=m_i$ otherwise.  As $n>n'$ then, whenever $e>2$, the sequence
$(m_i)$ has a gap after $m_{e-2}$ while the sequence $(\widetilde m_i)$
has a gap only if $\widetilde m_{e-1}\neq\widetilde m_{e-2}+1$.
Hence, if $e>2$ and $\widetilde m_{e-1}=\widetilde m_{e-2}+1$,
the ambient space for $P$ contains a summand $N_{(\widetilde m_{e-1})}$ which
is missing in $\widetilde P$.  In this case, we add this missing
summand back in: $\widetilde R=\widetilde P\oplus E_{(\widetilde m_{e-1})}$.
By Lemma~\ref{lemma-non-gap}, $\widetilde R\cong 
P((\widetilde m_i)\vee \widetilde m_{e-2})$.
Similarly, there may be missing gaps after $m_{f-1}$ for the sequence
$(m_i)$ when compared with $(\widetilde m_i)$; after $\widetilde m'_{f'-1}$ for
the sequence $(\widetilde m'_i)$ when compared to $(m'_i)$;
and after $m'_{e'-2}$ for the sequence $(m'_i)$ when compared to
$(\widetilde m'_i)$.  Formally we define:
\begin{align*}
  R &= \left\{\begin{array}{ll} P\oplus E_{(m_{f-1})} \cong
  P((m_i)\vee m_{f-1}) & \text{if} \; m_f=m_{f-1}+1 \\
  P & \text{otherwise}\end{array}\right. \\
  R' &= \left\{\begin{array}{ll} P'\oplus E_{(m'_{e'-2})} \cong
  P((m'_i)\vee m'_{e'-2}) & \text{if} \; e'\geq 2 \\
  & \quad \;\text{and}\; m_{e'-1}=m_{e'-2}+1 \\
  P' & \text{otherwise}\end{array}\right. \\
  \widetilde R &= \left\{\begin{array}{ll} \widetilde P\oplus E_{(\widetilde m_{e-2})} \cong
  P((\widetilde m_i)\vee \widetilde m_{e-2}) & \text{if} \; e\geq 2 \;
  \text{and}\; \widetilde m_{e-1}=\widetilde m_{e-2}+1 \\
  \widetilde P & \text{otherwise}\end{array}\right. \\
  \widetilde R' &= \left\{\begin{array}{ll} \widetilde P'\oplus E_{(\widetilde m'_{f'-1})} \cong
  P((\widetilde m'_i)\vee \widetilde m'_{f'-1}) & \text{if} \; \widetilde m_{f'}=\widetilde m_{f'-1}+1 \\
  \widetilde P' & \text{otherwise}\end{array}\right. \\
\end{align*}

The monomorphisms are as follows.
Write $R=((a)\subset B)$ where $B=N_\beta$
and $a=\sum p^{\ell_i} b_{\beta_i}$.
Then $\beta$ has a unique part $n$, say $\beta_u=n$, and $\ell_u=n-f$
(since $f$ is minimal with respect to the property 
that $p^f\cdot p^{\ell_u} b_n=0$).

\smallskip
Then $\widetilde R=((a)\subset \widetilde B)$ 
is the submodule of $R$ the ambient space
of which is generated by 
$b_{\beta_1},\ldots,b_{\beta_{u-1}},p^{n-n'}b_{\beta_u}, b_{\beta_{u+1}},\ldots$.
Thus, $\widetilde B=N_{\tilde \beta}$, where $\tilde \beta$ is 
obtained from $\beta$ replacing $\beta_u=n$ by
$\tilde \beta_u=n'$.  Hence $a=\sum p^{\tilde\ell_i} b_{\tilde\beta_i}$
where $\tilde\ell_i=\ell_i$ for $i\neq u$ and $\tilde\ell_u=n'-f$.
Clearly, $R/\widetilde R\cong E_{(n-n')}$.

\smallskip
Similarly, write $\widetilde R'=((c)\subset \widetilde D)$.
If $\widetilde D=N_{\tilde\delta}$ then $\tilde\delta$ 
contains a unique part $n$, say $\tilde\delta_v=n$.
Write $c=\sum p^{\tilde k_i} d_{\tilde\delta_i}$, then $\tilde k_v=n-f'$, as 
above.  

\smallskip
\sloppypar
The submodule $R'$ of $\widetilde R'$ has ambient space $D$ generated by 
$d_{\tilde\delta_1},\cdots,d_{\tilde\delta_{v-1}},p^{n-n'}d_{\tilde\delta_v},
d_{\tilde\delta{v+1}},\ldots$, so $D\cong N_\delta$ where the partition
$\delta$ is obtained from $\tilde\delta$ by replacing $\tilde\delta_v=n$
by $\delta_v=n'$.  We write $c=\sum p^{k_i}d_{\delta_i}$ where 
$k_v=n'-f'$ and $k_i=\tilde k_i$ for $i\neq v$.
Clearly, $\widetilde R'/R'\cong E_{(n-n')}$.

We summarize:

\begin{prop}\mylabel{prop-monomorphisms}
  Suppose $\Gamma, \widetilde\Gamma$ are LR-tableaux
  such that $\widetilde \Gamma$ is obtained from $\Gamma$ 
  by an increasing box move. 
  \begin{enumerate}\item
    Then there are cyclic embeddings
    $R$, $\widetilde R$, $R'$, $\widetilde R'$ and an embedding $S$,
    such that $S\oplus R\oplus \widetilde R$ has tableau $\Gamma$
    and $S\oplus R'\oplus\widetilde R'$ has tableau $\widetilde \Gamma$.
  \item
    There are monomorphisms of embeddings 
    $\widetilde R\to R$ and $R'\to \widetilde R'$
    such that both maps have cokernel $E_{(n-n')}$,
    where $n$ and $n'$ are the lengths of the columns in which
    the entries are exchanged.
  \end{enumerate}
  \qed
\end{prop}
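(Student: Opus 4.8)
The plan is to assemble both statements from the construction carried out in the paragraphs preceding the proposition; most of the work has in fact already been done there, and the proposition records it. For part~(1) I would invoke Proposition~\ref{prop-simul-pole-decomp}: since $\widetilde\Gamma$ is obtained from $\Gamma$ by an increasing box move, each of $\Gamma$ and $\widetilde\Gamma$ is the tableau of a direct sum of poles and an empty embedding, and the two decompositions agree except in two poles and perhaps in the empty embeddings. I would then set $S$ to consist of the shared poles together with the common part of the two empty embeddings, and let $P,P'$ (resp.\ $\widetilde P,\widetilde P'$) be the two poles in which the $\Gamma$- (resp.\ $\widetilde\Gamma$-) decomposition differs, labelled so that the column $C(e,f)_n$ is part of the tableau of $P$, the column $C(e',f')_{n'}$ part of that of $P'$, the column $C(e,f)_{n'}$ part of that of $\widetilde P$, and $C(e',f')_n$ part of that of $\widetilde P'$. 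Comparing height sequences shows that $(\widetilde m_i)$ is obtained from $(m_i)$ by lowering, by $n-n'$, exactly the block of entries belonging to the column at position $(e,f)$ and leaving the rest fixed, and dually for $(\widetilde m'_i)$ versus $(m'_i)$ at position $(e',f')$.

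Such a modification of a block of a height sequence can cause a neighbouring gap to appear or disappear, so that one of the corresponding ambient spaces acquires or loses an indecomposable summand. In each such case I would add the missing empty cyclic summand back in, using Lemma~\ref{lemma-non-gap} to identify the result with an extended pole; this produces the (possibly extended) poles $R,R',\widetilde R,\widetilde R'$. The empty summands introduced in this way are taken from the common empty embedding, and a short case check confirms that what remains, together with the shared poles, is one and the same embedding $S$ on both sides, and that $S$ together with the appropriate two of the four poles has tableau $\Gamma$ on the one hand and $\widetilde\Gamma$ on the other.

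For part~(2) I would write down the two monomorphisms explicitly. Write $R=((a)\subset B)$ with $B=N_\beta$ and $a=\sum_i p^{\ell_i}b_{\beta_i}$; minimality of $f$ forces $\beta$ to have a unique part equal to $n$, say $\beta_u=n$ with $\ell_u=n-f$. Replacing the generator $b_{\beta_u}$ of the summand $N_{(n)}$ by $p^{n-n'}b_{\beta_u}$ exhibits $\widetilde R$ as $((a)\subset\widetilde B)$, where $\widetilde B\subseteq B$ is a submodule isomorphic to $N_{\widetilde\beta}$ with $\widetilde\beta$ obtained from $\beta$ by replacing the part $n$ with $n'$; the generator $a$ lies in $\widetilde B$ and is sent to itself, so the inclusion $\widetilde B\hookrightarrow B$ together with the identity on $(a)$ is a monomorphism of embeddings $\widetilde R\to R$. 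On ambient modules its cokernel is $B/\widetilde B\cong N_{(n)}/p^{n-n'}N_{(n)}\cong N_{(n-n')}$ and on submodules it is zero, so the cokernel embedding is $E_{(n-n')}$. The second monomorphism is dual: writing $\widetilde R'=((c)\subset\widetilde D)$ with $\widetilde D=N_{\widetilde\delta}$ having a unique part $n$, namely the one belonging to $C(e',f')_n$, the ambient module of $R'$ is the submodule $N_\delta\subseteq\widetilde D$ obtained by replacing that part $n$ with $n'$, it contains $c$, and the same computation gives a monomorphism $R'\to\widetilde R'$ with cokernel $E_{(n-n')}$.

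The step I expect to require the most care is the bookkeeping in part~(1): one must decide, for each of the four gaps bordering the modified blocks in $(m_i)$, $(m'_i)$, $(\widetilde m_i)$, $(\widetilde m'_i)$, whether it survives the box move, track accordingly which empty cyclic summands migrate between $S$ and the four poles, and then verify that the row-wise unions of the resulting column data reproduce $\Gamma$ and $\widetilde\Gamma$ with the same $S$. Once the four poles and $S$ are fixed, part~(2) is a short direct verification.
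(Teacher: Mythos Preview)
Your proposal is correct and follows essentially the same route as the paper: the proposition is stated there with an immediate \qed, being a summary of the construction in the paragraphs preceding it---Proposition~\ref{prop-simul-pole-decomp} supplies the simultaneous pole decompositions, the gap bookkeeping (handled via Lemma~\ref{lemma-non-gap}) produces the possibly extended poles $R,R',\widetilde R,\widetilde R'$, and the two monomorphisms are exhibited by replacing the generator $b_n$ by $p^{n-n'}b_n$ in the relevant cyclic summand. Your pairing ($\Gamma$ with $S\oplus R\oplus R'$ and $\widetilde\Gamma$ with $S\oplus\widetilde R\oplus\widetilde R'$) is the intended one, consistent with the surrounding text and with Proposition~\ref{prop-same-tableau}; the printed statement has the tildes misplaced.
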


\subsection{An extension of two cyclic embeddings}
\mylabel{sec-extension}

In the set-up from the previous section,
we show that there is an embedding $Q$ and two exact sequences of embeddings,
each with $Q$ as the middle term, such that the sum of the end terms
of the two exact sequences have tableaux
$\Gamma$ and $\widetilde\Gamma$, respectively.
The main result is that  $Q$ has LR-tableau $\Gamma$.

\smallskip
We use the cokernel maps from
Proposition~\ref{prop-monomorphisms} to construct $Q$ as a pullback
in the category of homomorphisms of $\Lambda$-modules.

$$\begin{CD}
  @.  @.  0  @. 0 \\
  @. @.  @VVV @VVV \\
  @.      @.   \widetilde R  @=  \widetilde R  \\
  @.      @.   @VVV     @VVV \\
  0 @>>> R' @>>> Q @>>> R @>>> 0 \\
  @. @| @VVV @VVV \\
  0 @>>> R' @>>> \widetilde R' @>>> E_{(n-n')} @>>> 0 \\
  @. @. @VVV @VVV \\
  @. @. 0 @. 0
\end{CD}
$$

In general, as in the following
example, the middle row is not split exact:

\begin{ex}
Consider the following two tableaux from the example in the Introduction.
  $$
  \raisebox{.5cm}{$\Gamma=\Gamma_2:$} \quad
  \begin{picture}(15,15)
    \multiput(0,3)(0,3)4{\smbox}
    \multiput(3,6)(0,3)3{\smbox}
    \multiput(6,9)(0,3)2{\smbox}
    \multiput(9,12)(0,3)1{\smbox}
    \put(12,12){\numbox1}
    \put(9,9){\numbox2}
    \put(6,6){\numbox1}
    \put(3,3){\numbox3}
    \put(0,0){\numbox2}
  \end{picture}
\qquad
  \raisebox{.5cm}{$\widetilde\Gamma=\Gamma_{3a}:$} \quad
  \begin{picture}(15,15)
    \multiput(0,3)(0,3)4{\smbox}
    \multiput(3,6)(0,3)3{\smbox}
    \multiput(6,9)(0,3)2{\smbox}
    \multiput(9,12)(0,3)1{\smbox}
    \put(12,12){\numbox1}
    \put(9,9){\numbox1}
    \put(6,6){\numbox2}
    \put(3,3){\numbox3}
    \put(0,0){\numbox2}
  \end{picture}
$$

$\widetilde\Gamma$ is obtained
from $\Gamma$ by an increasing box move.  It gives rise to the 
following pole decomposition.
  $$
  \raisebox{.5cm}{For $\Gamma,\; R=P((2,4)):$} \;
  \begin{picture}(6,15)
    \multiput(0,0)(0,3)5{\smbox}
    \multiput(3,3)(0,3)3{\smbox}
    \multiput(1.5,4.5)(3,0)2{\sbullet}
    \put(1.5,4.5){\line(1,0){3}}
  \end{picture}
\quad
  \raisebox{.5cm}{$R'=P((0,1,3)\vee 0):$} \;
  \begin{picture}(9,15)
    \multiput(0,0)(0,3)4{\smbox}
    \multiput(3,3)(0,3)2{\smbox}
    \multiput(6,6)(0,3)1{\smbox}
    \multiput(1.5,7.5)(3,0)3{\sbullet}
    \put(1.5,7.5){\line(1,0)6}
  \end{picture}
\;
  \raisebox{.5cm}{$\cong$} \;
  \begin{picture}(12,15)
    \multiput(0,0)(0,3)4{\smbox}
    \multiput(3,3)(0,3)2{\smbox}
    \multiput(9,6)(0,3)1{\smbox}
    \multiput(1.5,7.5)(3,0)2{\sbullet}
    \put(1.5,7.5){\line(1,0)3}
  \end{picture}
$$
  $$
  \raisebox{.5cm}{For $\widetilde\Gamma,\; \widetilde R=P((1,4)):$} \;
  \begin{picture}(6,15)
    \multiput(0,0)(0,3)5{\smbox}
    \multiput(3,3)(0,3)2{\smbox}
    \multiput(1.5,4.5)(3,0)2{\sbullet}
    \put(1.5,4.5){\line(1,0){3}}
  \end{picture}
\quad
  \raisebox{.5cm}{$\widetilde R'=P((0,2,3)\vee 2):$} \;
  \begin{picture}(9,15)
    \multiput(0,0)(0,3)4{\smbox}
    \multiput(3,3)(0,3)3{\smbox}
    \multiput(6,6)(0,3)1{\smbox}
    \multiput(1.5,7.5)(3,0)3{\sbullet}
    \put(1.5,7.5){\line(1,0)6}
  \end{picture}
\;
  \raisebox{.5cm}{$\cong$} \;
  \begin{picture}(12,15)
    \multiput(0,0)(0,3)4{\smbox}
    \multiput(3,6)(0,3)1{\smbox}
    \multiput(9,3)(0,3)3{\smbox}
    \multiput(1.5,7.5)(3,0)2{\sbullet}
    \put(1.5,7.5){\line(1,0)3}
  \end{picture}
$$
Note that the embeddings $\widetilde R\to R$ and $R'\to \widetilde R'$
both have cokernel $E_{(1)}$.   The pullback $Q$ of the diagram
$\begin{CD} @. R\\ @. @VVV\\ \widetilde R' @>>> E_{(1)}\end{CD}$
has the following direct sum decomposition.
$$
\raisebox{.5cm}{$Q:$} \quad
\begin{picture}(15,15)
  \multiput(0,0)(0,3)5{\smbox}
  \multiput(3,3)(0,3)3{\smbox}
  \multiput(6,3)(0,3)2{\smbox}
  \multiput(9,0)(0,3)4{\smbox}
  \multiput(12,6)(0,3)1{\smbox}
  \multiput(1.5,4.5)(3,0)3{\sbullet}
  \multiput(7.5,7.5)(3,0)3{\sbullet}
  \put(1.5,4.5){\line(1,0){6}}
  \put(7.5,7.5){\line(1,0){6}}
\end{picture}
\quad
\raisebox{.5cm}{$\cong$} \quad
\begin{picture}(18,15)
  \multiput(0,0)(0,3)5{\smbox}
  \multiput(3,3)(0,3)3{\smbox}
  \multiput(6,3)(0,3)2{\smbox}
  \multiput(9,0)(0,3)4{\smbox}
  \multiput(15,6)(0,3)1{\smbox}
  \multiput(1.5,4.5)(3,0)3{\sbullet}
  \multiput(7.5,7.5)(3,0)2{\sbullet}
  \put(1.5,4.5){\line(1,0){6}}
  \put(7.5,7.5){\line(1,0){3}}
\end{picture}
\quad
\raisebox{.5cm}{$\cong$} \quad
\begin{picture}(18,15)
  \multiput(0,0)(0,3)5{\smbox}
  \multiput(3,3)(0,3)3{\smbox}
  \multiput(6,3)(0,3)2{\smbox}
  \multiput(9,0)(0,3)4{\smbox}
  \multiput(15,6)(0,3)1{\smbox}
  \multiput(1.5,4.5)(6,0)2{\sbullet}
  \multiput(4.5,7.5)(3,0)3{\sbullet}
  \put(1.5,4.5){\line(1,0){6}}
  \put(4.5,7.5){\line(1,0){6}}
\end{picture}
\quad
\raisebox{.5cm}{$\cong$} \quad
\begin{picture}(21,15)
  \multiput(0,0)(0,3)5{\smbox}
  \multiput(3,3)(0,3)2{\smbox}
  \multiput(6,0)(0,3)4{\smbox}
  \multiput(12,3)(0,3)3{\smbox}
  \multiput(18,6)(0,3)1{\smbox}
  \multiput(1.5,4.5)(3,0)2{\sbullet}
  \multiput(4.5,7.5)(3,0)2{\sbullet}
  \put(1.5,4.5){\line(1,0){3}}
  \put(4.5,7.5){\line(1,0){3}}
\end{picture}
$$
Each isomorphism is given by a change of the generators of the ambient space:
$b_2'=b_2+b_1$; $b_2''=b_2'+pb_3$, $b_3''=-b_3$; $b_4'''=b_4+b_3''$.
The last sum contains a summand which is an indecomposable embedding which 
is not cyclic, see \cite[(6.5)]{rs}.
\end{ex}

We define the module $Q$ and the maps in the pushout diagram.
For this, we use the notation from above Proposition~\ref{prop-monomorphisms},
in particular, $R=((a)\subset B)$ where $a=\sum p^{\ell_i}b_{\beta_i}$
and $R'=((c)\subset D)$ where $c=\sum p^{k_i}d_{\delta_i}$. 

\smallskip
Put $Q=((r,s)\subset B\oplus D)$ where $r=(a,p^{n'-f}d_{n'})$
and $s=(0,c)$.

\smallskip
Clearly, $R'$ embeds into the second component in $Q$; the 
cokernel of this embedding is $R$.

\smallskip
The module $\widetilde R$ embeds into the first component and the 
summand of the second component generated by $d_{n'}$; 
for $i\neq u$, $\tilde b_{\beta_i}$ is mapped to $b_{\beta_i}$ while
$\tilde b_{\beta_u}=b_{n'}$ is mapped to $(p^{n-n'} b_n,d_{n'})$.
In particular, $p^{\tilde\ell_u} b_{n'}=p^{n'-f} b_{n'}$ maps to 
$(p^{n-f}b_n,p^{n'-f}d_{n'})=(p^{\ell_u}b_n,p^{n'-f}d_{n'})$,
so $a\in\widetilde B$ is sent to $(a,p^{n'-f}d_{n'})=r\in Q$.
The cokernel of this map is $Q\to \widetilde R'$; here the 
second component $D$ of the ambient space is included into $\widetilde D$,
as in the embedding $R'\to \widetilde R'$ above,
and the map $B\to \widetilde D$ is given by $b_{\beta_i}\mapsto 0$
if $i\neq u$ and $b_{\beta_u}=b_n\mapsto -d_n$.

\smallskip
With these modules and maps, all the squares in the pullback diagram
are commutative.

\begin{prop}\mylabel{prop-same-tableau}
The embeddings $Q$, $R\oplus R'$ have the same tableau.
\end{prop}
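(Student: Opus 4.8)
The plan is to compute the LR-tableau of $Q$ directly from its description as the embedding $((r,s)\subset B\oplus D)$, by identifying the ambient module $B\oplus D$, the submodule generated by $r$ and $s$, and then reading off the chain of quotients $\;(B\oplus D)/p^i(r,s)\Lambda + (r,s)\Lambda\;$ that defines the tableau. Since $R=((a)\subset B)$ and $R'=((c)\subset D)$ are (possibly extended) poles with known generators $a=\sum p^{\ell_i}b_{\beta_i}$ and $c=\sum p^{k_i}d_{\delta_i}$, the ambient module $B\oplus D=N_\beta\oplus N_\delta$ is transparent. The first step is therefore to understand the $\Lambda$-submodule $U=\Lambda r+\Lambda s$ of $B\oplus D$, where $r=(a,p^{n'-f}d_{n'})$ and $s=(0,c)$, and in particular to compute $\Im U/p^iU$ inside $(B\oplus D)/p^i U$ for each $i$. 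Note that $s$ generates a copy of $A'\cong N_{\alpha'}$ (the submodule of $R'$) sitting entirely in $D$, while $r$ projects onto the submodule generator $a$ of $R$ in the first component and has a ``tail'' $p^{n'-f}d_{n'}$ in $D$; the submodule $U=\Lambda r\oplus'\Lambda s$ is in fact isomorphic as a $\Lambda$-module to $A\oplus A'$ since $r$ and $s$ together with their torsion behave like a direct sum (the orders are $p^f$ for $r$ — as $p^f a=0$ and $p^f p^{n'-f}d_{n'}=p^{n'}d_{n'}=0$ — and whatever the order of $c$ is). So as an abstract embedding $Q$ has submodule $A\oplus A'\cong N_\alpha$, matching the content of $\Gamma$.

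Next I would compute the sequence of partitions $\gamma^{(i)}$ for $Q$. The key observation is that because the tail of $r$ lives in the summand $\Lambda d_{n'}$ of $D$, which is exactly the ``new'' part of $\widetilde R'$ over $R'$ relabeled (recall $\widetilde R'/R'\cong E_{(n-n')}$ and the tableau bookkeeping of Section~\ref{sec-two-mono}), passing from $B\oplus D$ modulo $p^i U$ untangles into the factor $B/p^i(\Lambda a)$ — which records the columns of $R$'s tableau — together with $D/p^i(\Lambda c + \Lambda p^{n'-f}d_{n'})$. This latter module is where the interaction lives: adding the element $p^{n'-f}d_{n'}$ to the submodule of $R'$ has the effect, at the level of partitions, of shrinking the column $C'=C(e',f')_{n'}$ attached to $R'$ down by exactly the amount that turns it (together with the bottom part of the $d_{n'}$-summand) into the configuration appearing in $R\oplus R'$ rather than in $\widetilde R\oplus\widetilde R'$. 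Concretely, for each $i$ one checks that $\gamma^{(i)}(Q)=\gamma^{(i)}(R)\cup\gamma^{(i)}(R')$ as multisets of part-lengths; this is a finite verification column by column using Proposition~\ref{prop-tableau-of-cyclic}(2) (the height sequence determines in which row each entry sits) applied to the explicit height sequences of $R$ and $R'$ listed in Section~\ref{sec-two-mono}.

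The cleanest route is probably to avoid an element-by-element quotient computation and instead argue via the two short exact sequences of embeddings already in hand: the pullback square gives $0\to R'\to Q\to R\to 0$ and $0\to\widetilde R\to Q\to\widetilde R'\to 0$ in $\mathcal S(\Lambda)$. From the first sequence, the ambient space of $Q$ is an extension of $B$ by $D$ with $A\oplus A'$ as submodule; since $R'$ is a direct summand of $Q$ as a $\Lambda$-module pair exactly when the first sequence splits, and we want the \emph{tableau} of $Q$ to equal that of $R\oplus R'$ (not that $Q$ itself splits — the example shows it need not), I would instead take $\Lambda$-module resolutions level by level: apply $-\otimes_\Lambda \Lambda/(p^i)$-type bookkeeping, i.e. tensor the first sequence with the relevant quotients to get, for each $i$, an exact sequence $(B/p^iA)$-level relating $(B\oplus D)/p^iU$ to $B/p^i A$ and $D/p^i A'$, and observe the connecting maps vanish on the relevant graded pieces because of the homogeneity statements in Lemma~\ref{lemma-filtration}. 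The main obstacle is exactly this last point: showing that the ``extension'' contributing to $Q$ does not alter any of the partitions $\gamma^{(i)}$ — that is, that although $Q\not\cong R\oplus R'$ as embeddings, the ambient modules $(B\oplus D)/p^i(r,s)$ and $(B/p^iA)\oplus(D/p^ic)$ have the same isomorphism type for every $i$. I expect this to come down to checking that the element $p^{n'-f}d_{n'}$ added to $R'$'s submodule is, in each quotient $D/p^i c$, either zero or homogeneous of the ``correct'' height so that it merely moves a generator within its graded layer without merging two summands — which is guaranteed by the inequalities $n>n'$, $e<e'$, $f<f'$, $f-e=f'-e'$ built into the box move, together with Lemma~\ref{lemma-filtration}(2)–(4) applied to $R'$ (and $\widetilde R'$). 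Once that homogeneity is in place, equality of the two tableaux is immediate.
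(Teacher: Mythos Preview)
Your proposal identifies the right ingredients --- the filtration of Lemma~\ref{lemma-filtration} and the homogeneity of the submodule generators --- but it does not assemble them into a proof, and one of your key reductions is incorrect.

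The ``untangling'' step is the problem.  You write that $(B\oplus D)/p^iU$ decomposes as $B/p^i(\Lambda a)$ together with $D/p^i(\Lambda c+\Lambda p^{n'-f}d_{n'})$.  It does not: the element $p^ir=(p^ia,\,p^{i+n'-f}d_{n'})$ is a \emph{single} generator of $p^iU$, so the submodule $p^iU$ is not a product of a submodule of $B$ with a submodule of $D$.  The quotient $(B\oplus D)/p^iU$ is a genuine extension, not a direct sum, and the whole difficulty is precisely that $Q\not\cong R\oplus R'$ while their tableaux agree.  Your fallback via the short exact sequence $0\to R'\to Q\to R\to 0$ and ``connecting maps vanish'' does not repair this: the tableau is not an additive invariant on exact sequences in $\mathcal S(\Lambda)$, and you never specify which functor you are tensoring or why the relevant boundary map vanishes.

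What the paper actually does is closer to your first route, but executed differently.  Rather than comparing the isomorphism types $(B\oplus D)/p^iW$ and $(B\oplus D)/p^iU$ directly, it observes that the tableau is determined by the lengths of $H/(p^\ell U+p^mH)$ for all $\ell,m$ (here $H=B\oplus D$, and $U,W$ denote the submodules of $R\oplus R'$ and $Q$ respectively).  Hence it suffices to show $\operatorname{len}(p^\ell U+p^mH)=\operatorname{len}(p^\ell W+p^mH)$ for all $\ell,m$.  The crucial construction you are missing is the \emph{shifted} filtration
\[
\mathcal H_\alpha \;=\; \mathcal F_{\alpha-f}\;\oplus\;\mathcal G_{\alpha-f'}
\]
on $H$, where $(\mathcal F_\bullet)$ and $(\mathcal G_\bullet)$ are the filtrations of $B$ and $D$ centered at $a$ and $c$.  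The shifts by $f$ and $f'$ force both $r$ and $(a,0)$ to be homogeneous of degree $f$, and both $s$ and $(0,c)$ to be homogeneous of degree $f'$, so one can compare $p^\ell U+p^mH$ and $p^\ell W+p^mH$ layer by layer in the associated graded $V_\alpha=\mathcal H_\alpha/\mathcal H_{\alpha-1}$.  A short case analysis on $\alpha$ (relative to $f-\ell$ and $1+f-e$) then shows that in each layer either the two subspaces coincide, or they differ only in whether the basis vector for $d_{n'}$ is attached to the image of $r$ or not --- and in the latter case their dimensions still agree.  This case analysis, together with the specific shifts $f,f'$ in the filtration, is the content you would need to supply.
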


\begin{proof}
Suppose $R\oplus R'$ has tableau $\Gamma=[\gamma^{(\ell)}]_\ell$
and $Q$ has tableau $\Delta=[\delta^{(\ell)}]_\ell$.
The two tableaux are equal if we can show 
for each pair of natural numbers $\ell$, $m$ that the tableaux 
$\Gamma$, $\Delta$ have the same number
of boxes which are empty or have entry at most $\ell$ in the first $m$ rows:
$$(\gamma^{(\ell)})'_1+\cdots+(\gamma^{(\ell)})'_m=(\delta^{(\ell)})'_1+
   \cdots(\delta^{(\ell)})'_m.$$

We denote by $H$ the common ambient space of $R\oplus R'=
((a,c)\subset B\oplus D)$ and of $Q=((r,s)\subset B\oplus D)$,
and denote the two subspaces by $U$ and $W$, respectively.
If the embedding $(U\subset H)$ has tableau $\Gamma$, 
the embedding $((U+p^mH)/p^mH\subset H/p^mH)$ has the tableau which
consists of the first $m$ rows of $\Gamma$.
The number of boxes which are empty or have entry at most $\ell$,
given by the left hand side of the above equation, is just the length
of the module
$$\big(H/p^m H\big) \big/ \big ((p^\ell U +p^m H)/(p^mH)\big)
\cong H/(p^\ell U+p^m H).$$

Let $(\mathcal F_\alpha)$ and $(\mathcal G_\alpha)$ be the filtrations of 
$B$ and $D$ centered at the elements $a$ and $c$, respectively,
see Lemma~\ref{lemma-filtration}.
Consider the submodule generators for $Q$, 
$r=(a,p^{n'-f}d_{n'})$ and $s=(0,c)$.
We define a filtration $(\mathcal H_\alpha)$ for $H=B\oplus D$ by 
adjusting the filtrations $(\mathcal F_\alpha)$ and $(\mathcal G_\alpha)$ 
for $B$ and $D$ such that 
the elements $r$ and $s$ are homogeneous in degrees $f$ and $f'$,
respectively:
$$\mathcal H_\alpha=\mathcal F_{\alpha-f}\oplus \mathcal G_{\alpha-f'}.$$

We have seen above that we need to show that for all $\ell, m\in\mathbb N$,
the $\Lambda$-modules $p^\ell U+p^m H$ and $p^\ell W+p^m H$ have the same 
length.  For this, it suffices to show that for each $\alpha\in\mathbb Z$,
$\ell,m\in\mathbb N$, the subspaces 
$U_{\alpha,\ell,m}$ and $W_{\alpha,\ell,m}$ of 
$V_\alpha=\mathcal H_\alpha/\mathcal H_{\alpha-1}$ have the same dimension:
\begin{eqnarray*}
  U_{\alpha,\ell,m} &=& \frac{[(p^\ell U+p^m H)\cap \mathcal H_\alpha]+H_{\alpha-1}}{%
\mathcal H_{\alpha-1}};\\
W_{\alpha,\ell,m} &=& \frac{[(p^\ell W+p^m H)\cap \mathcal H_\alpha]+H_{\alpha-1}}{%
\mathcal H_{\alpha-1}}\end{eqnarray*}

This statement holds for $\alpha>f-\ell$ and for $\alpha\leq 0$ since 
$U_{\alpha,\ell,m}=W_{\alpha,\ell,m}$.  

\smallskip
Two cases remain:

\smallskip
Assume $1\leq\alpha\leq\max\{f-\ell,1+f-e\}$.
Then $V_\alpha$ has dimension $u+v$, a basis is induced by the generators
of $B$ and $D$:  
$b_{\beta_1},\ldots,b_{\beta_u}=b_n; d_{\delta_1},\ldots d_{\delta_v}=d_{n'}.$
Here is a sketch of the basis for $V_\alpha$ together with the 
subspace generators for $U_{\alpha,0,0}$ and 
$W_{\alpha,0,0}$:
$$
\raisebox{0.4cm}{$U_{\alpha,0,0}:$} \quad
\begin{picture}(30,10)(0,-3)
  \put(0,3){\smbox}
  \put(0,0){$b_{\beta_1}$}
  \put(3,3){\smbox}
  \put(4,-3){$b_{\beta_2}$}
  \put(7,3){$\cdots$}
  \put(12,3){\smbox}
  \put(12,0){$b_n$}
  \put(15,3){\smbox}
  \put(15,-3){$d_{n'}$}
  \put(18,3){\smbox}
  \put(18,0){$d_{\delta_{v-1}}$}
  \put(22,3){$\cdots$}
  \put(27,3){\smbox}
  \put(27,0){$d_{\delta_1}$}
  \multiput(1.5,4.5)(3,0)2{\sbullet}
  \multiput(13.5,4.5)(3,0)3{\sbullet}
  \multiput(28.5,4.5)(3,0)1{\sbullet}
  \put(1.5,4.5){\line(1,0){5.5}}
  \put(11,4.5){\line(1,0){2.5}}
  \put(16.5,4.5){\line(1,0){5.5}}
  \put(26,4.5){\line(1,0){2.5}}
\end{picture}
\qquad
\raisebox{0.4cm}{$W_{\alpha,0,0}:$} \quad
\begin{picture}(30,10)(0,-3)
  \put(0,3){\smbox}
  \put(0,0){$b_{\beta_1}$}
  \put(3,3){\smbox}
  \put(4,-3){$b_{\beta_2}$}
  \put(7,3){$\cdots$}
  \put(12,3){\smbox}
  \put(12,0){$b_n$}
  \put(15,3){\smbox}
  \put(15,-3){$d_{n'}$}
  \put(18,3){\smbox}
  \put(18,0){$d_{\delta_{v-1}}$}
  \put(22,3){$\cdots$}
  \put(27,3){\smbox}
  \put(27,0){$d_{\delta_1}$}
  \multiput(1,5)(3,0)2{\sbullet}
  \multiput(13,5)(3,0)2{\sbullet}
  \multiput(17,4)(3,0)2{\sbullet}
  \multiput(29,4)(3,0)1{\sbullet}
  \put(1,5){\line(1,0){6}}
  \put(11,5){\line(1,0){5}}
  \put(17,4){\line(1,0){5}}
  \put(26,4){\line(1,0){3}}
\end{picture}
$$
If $\alpha<n'-m$ then both $U_{\alpha,\ell,m}$ and $W_{\alpha,\ell,m}$
have dimension two, as there is no contribution from the term $p^mH$.
Otherwise, the basic element corresponding to $d_{n'}$ is contained 
in the space generated by $p^mH$, and hence $U_{\alpha,\ell,m}=W_{\alpha,\ell,m}$.

\smallskip
It remains to consider the case where $\max\{f-l,1+f-e\}<\alpha\leq f-\ell$
(this case does not occur if $e=1$).
Here, the space $V_\alpha$ contains at least two additional basic elements,
given by $b_{\beta_{u+1}}$ and $d_{\delta_{v+1}}$. 
Again, if $\alpha<n'-m$, then both $U_{\alpha,\ell,m}$ and $W_{\alpha,\ell,m}$
have the same dimension 
$2+\dim[(p^mH\cap \mathcal H_\alpha)+\mathcal H_{\alpha-1}]/\mathcal H_{\alpha-1}$;
otherwise the basic element corresponding to $d_{n'}$
is contained in the space generated by $p^mH$, and then
the subspaces are equal:  $U_{\alpha,\ell,m}=W_{\alpha,\ell,m}$.

\smallskip
We have seen that for all choices of $\alpha$, $\ell$ and $m$, the
spaces $U_{\alpha,\ell,m}$, $W_{\alpha,\ell,m}$ have the same dimension;
it follows that $Q$ and $R\oplus R'$ have the same tableau:
$\Delta=\Gamma$.
\end{proof}

The following example is a minor modification of the previous and 
may help to follow the details in the proof.

\begin{ex} 
Consider the LR-tableaux:
  $$
  \raisebox{.5cm}{$\Gamma:$} \quad
  \begin{picture}(15,18)
    \multiput(0,3)(0,3)5{\smbox}
    \multiput(3,9)(0,3)3{\smbox}
    \multiput(6,12)(0,3)2{\smbox}
    \put(12,15){\numbox1}
    \put(9,15){\numbox1}
    \put(9,12){\numbox2}
    \put(6,9){\numbox3}
    \put(6,6){\numbox4}
    \put(3,6){\numbox2}
    \put(3,3){\numbox3}
    \put(0,0){\numbox5}
  \end{picture}
\qquad
  \raisebox{.5cm}{$\widetilde\Gamma:$} \quad
  \begin{picture}(15,18)
    \multiput(0,3)(0,3)5{\smbox}
    \multiput(3,9)(0,3)3{\smbox}
    \multiput(6,12)(0,3)2{\smbox}
    \put(12,15){\numbox1}
    \put(9,15){\numbox1}
    \put(9,12){\numbox2}
    \put(6,9){\numbox2}
    \put(6,6){\numbox3}
    \put(3,6){\numbox3}
    \put(3,3){\numbox4}
    \put(0,0){\numbox5}
  \end{picture}
$$
$\widetilde\Gamma$ is obtained
from $\Gamma$ by an increasing box move which exchanges 
the columns $C(2,3)_5$ and $C(3,4)_4$ in $\Gamma$ by
$C(3,4)_5$ and $C(2,3)_4$.  It gives rise to the 
following pole decomposition: 
$R=P((0,3,4))$, $R'=P((0,1,2,3,5)\vee 1)$,
$\widetilde R=P((0,2,3))$, $\widetilde R'=P((0,1,3,4,5)\vee 4)$.
  $$
  \raisebox{.8cm}{$R:$} \quad
  \begin{picture}(6,18)
    \multiput(0,3)(0,3)5{\smbox}
    \multiput(3,9)(0,3)1{\smbox}
    \multiput(1.5,10.5)(3,0)2{\sbullet}
    \put(1.5,10.5){\line(1,0){3}}
  \end{picture}
\qquad
  \raisebox{.8cm}{$R':$} \quad
  \begin{picture}(9,18)
    \multiput(0,9)(0,3)2{\smbox}
    \multiput(3,0)(0,3)6{\smbox}
    \multiput(6,3)(0,3)4{\smbox}
    \multiput(1.5,13.5)(3,0)3{\sbullet}
    \put(1.5,13.5){\line(1,0){6}}
  \end{picture}
\qquad
  \raisebox{.8cm}{$\widetilde R:$} \quad
  \begin{picture}(6,18)
    \multiput(0,3)(0,3)4{\smbox}
    \multiput(3,9)(0,3)1{\smbox}
    \multiput(1.5,10.5)(3,0)2{\sbullet}
    \put(1.5,10.5){\line(1,0){3}}
  \end{picture}
\qquad
  \raisebox{.8cm}{$\widetilde R':$} \quad
  \begin{picture}(9,18)
    \multiput(0,9)(0,3)2{\smbox}
    \multiput(3,0)(0,3)6{\smbox}
    \multiput(6,3)(0,3)5{\smbox}
    \multiput(1.5,13.5)(3,0)3{\sbullet}
    \put(1.5,13.5){\line(1,0){6}}
  \end{picture}
$$

In the proof we show that $Q$ and $R\oplus R'$ have the same tableau.
The columns in the diagrams are labeled by the generators of $H_\Lambda$;
the boxes in a row labeled $\alpha$ correspond to a basis for $V_\alpha$.
Note that the submodule generators $r$, $s$ of $Q$ and 
$(a,0)$ and $(0,c)$ of $R\oplus R'$ are homogeneous elements in
degrees $f$ and $f'$, respectively.

$$
  \raisebox{1.1cm}{$Q:$} \quad
  \begin{picture}(35,18)(0,-3)
    \multiput(0,9)(0,3)2{\smbox}
    \multiput(3,0)(0,3)6{\smbox}
    \multiput(6,3)(0,3)4{\smbox}
    \multiput(9,3)(0,3)5{\smbox}
    \multiput(12,9)(0,3)1{\smbox}
    \multiput(1.5,13.5)(3,0)3{\sbullet}
    \multiput(7.5,10.5)(3,0)3{\sbullet}
    \put(1.5,13.5){\line(1,0){6}}
    \put(7.5,10.5){\line(1,0){6}}
    \put(0,-3){$d_2$}
    \put(3,-6){$d_6$}
    \put(6,-3){$d_4$}
    \put(9,-6){$b_5$}
    \put(12,-3){$b_1$}
    \multiput(18,1.5)(0,3)6{\line(1,0){4}}
    \put(24,1){$\scriptstyle 0$}
    \put(24,4){$\scriptstyle 1$}
    \put(24,7){$\scriptstyle 2=1+f-e$}
    \put(24,10){$\scriptstyle 3=f$}
    \put(24,13){$\scriptstyle 4=f'$}
    \put(24,16){$\scriptstyle 5$}
  \end{picture}
\qquad
  \raisebox{1.1cm}{$R\oplus R':$} \quad
  \begin{picture}(15,18)(0,-3)
    \multiput(0,9)(0,3)2{\smbox}
    \multiput(3,0)(0,3)6{\smbox}
    \multiput(6,3)(0,3)4{\smbox}
    \multiput(9,3)(0,3)5{\smbox}
    \multiput(12,9)(0,3)1{\smbox}
    \multiput(1.5,13.5)(3,0)3{\sbullet}
    \multiput(10.5,10.5)(3,0)2{\sbullet}
    \put(1.5,13.5){\line(1,0){6}}
    \put(10.5,10.5){\line(1,0){3}}
    \put(0,-3){$d_2$}
    \put(3,-6){$d_6$}
    \put(6,-3){$d_4$}
    \put(9,-6){$b_5$}
    \put(12,-3){$b_1$}
  \end{picture}
$$
\end{ex}

\section{The boundary relation for invariant subspace varieties}
\mylabel{sec-boundary}

In this section we assume that the field $k$ is algebraically closed
and that the discrete valuation ring 
$\Lambda$ is a $k$-algebra, for example
the power series ring $\Lambda= k[[T]]$ or the localization of the polynomial
ring $\Lambda= k[T]_{(T)}$.
In each case, a finite dimensional $\Lambda$-module $B$ is
a nilpotent linear operator, determined up to isomorphy by the partition 
which lists the sizes of the Jordan blocks.  A cyclic embedding
$(A\subset B)$ consists also of a subspace $A\subset B$, invariant under
the action of the operator, and such that the induced action on $A$ gives rise
to at most one Jordan block.

\smallskip
Here is our main result:

\begin{thm} \mylabel{theorem-one-parameter-family}
  Suppose that $\Gamma$, $\widetilde\Gamma$ are LR-tableaux
    such that $\widetilde \Gamma$ is obtained from $\Gamma$
    by an increasing box move.  
    There is a one-parameter family of embeddings $Q(\mu)$
    such that $Q(\mu)$ has tableau $\Gamma$ for $\mu\neq 0$
    and tableau $\widetilde \Gamma$ for $\mu=0$.
\end{thm}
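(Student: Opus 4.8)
The plan is to build the one-parameter family directly out of the pullback construction from Section~\ref{sec-extension}. Recall that from the increasing box move we obtained cyclic embeddings $R$, $R'$, $\widetilde R$, $\widetilde R'$ together with monomorphisms $\widetilde R\to R$ and $R'\to\widetilde R'$ having common cokernel $E_{(n-n')}$ (Proposition~\ref{prop-monomorphisms}), and an embedding $S$ such that $S\oplus R\oplus\widetilde R$ has tableau $\Gamma$ and $S\oplus R'\oplus\widetilde R'$ has tableau $\widetilde\Gamma$. The pullback $Q$ of the cokernel maps $R\to E_{(n-n')}\leftarrow\widetilde R'$ was shown in Proposition~\ref{prop-same-tableau} to have tableau $\Gamma$ (the same as $R\oplus R'$). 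What remains is to exhibit $Q$ as the generic member of a family whose special fibre is (isomorphic to) $\widetilde R\oplus\widetilde R'$, and then tensor everything with $S$.

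First I would write down the family explicitly using the coordinates from Section~\ref{sec-extension}. There $Q=((r,s)\subset B\oplus D)$ with $r=(a,p^{n'-f}d_{n'})$ and $s=(0,c)$, where $R=((a)\subset B)$, $R'=((c)\subset D)$, $a=\sum p^{\ell_i}b_{\beta_i}$, $c=\sum p^{k_i}d_{\delta_i}$. I would set $Q(\mu)=((r_\mu,s)\subset B\oplus D)$ where $r_\mu=(a,\mu\cdot p^{n'-f}d_{n'})$, keeping $s=(0,c)$ fixed. For $\mu\neq 0$ the automorphism of $B\oplus D$ rescaling the $d_{n'}$-component carries $Q(\mu)$ to $Q(1)=Q$, so $Q(\mu)$ has tableau $\Gamma$ by Proposition~\ref{prop-same-tableau}. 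For $\mu=0$ we get $Q(0)=((a,0)\subset B\oplus D)$ with subspace generators $(a,0)$ and $(0,c)$; since the two generators now live in the two summands separately, $Q(0)\cong((a)\subset B)\oplus((c)\subset D)=R\oplus R'$. But that has tableau $\Gamma$, not $\widetilde\Gamma$ — so the naive rescaling degenerates in the wrong direction, and I must instead degenerate along a direction that makes a gap disappear. The correct family therefore has to deform the \emph{ambient space} so that at $\mu=0$ the summand $N_{(n)}$ of the ambient space of $Q$ collapses to $N_{(n')}$, matching the ambient spaces of $\widetilde R\oplus\widetilde R'$.

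Concretely, I would instead realize $Q$ inside the common ambient space $H=B\oplus D$ and twist the embedding by an automorphism family $\phi_\mu\in\Aut_\Lambda(H)$ with $\phi_1=\id$, defined so that $\phi_0$ is no longer invertible in the relevant sense — equivalently, work with the family of \emph{subspaces} $U(\mu)=\phi_\mu(U)\subset H$ where $U$ is the subspace of $Q$. The natural choice comes from the pullback square: on the $\widetilde R$-part, $b_{n'}$ maps to $(p^{n-n'}b_n, d_{n'})$, and scaling the $b_n$ component by $\mu$ interpolates between this (at $\mu=1$, giving $Q$ with tableau $\Gamma$) and $(0,d_{n'})$ (at $\mu=0$, where the element of order $p^{n'}$ in the ambient space decouples, producing precisely the generator pattern of $\widetilde R\oplus\widetilde R'$, hence tableau $\widetilde\Gamma$). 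I would verify at $\mu=0$ that the resulting embedding, after the coordinate changes $\tilde b_{\beta_i}\mapsto b_{\beta_i}$ ($i\neq u$) and $\tilde b_{\beta_u}=b_{n'}\mapsto d_{n'}$, is exactly $\widetilde R\oplus\widetilde R'$, using Lemma~\ref{lemma-non-gap} to absorb the extended-pole summands $E_{(\cdot)}$ that appear when gaps merge.

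The main obstacle I expect is the $\mu=0$ fibre computation: one must check that the degenerate subspace has tableau exactly $\widetilde\Gamma$ and not merely some tableau dominated by $\Gamma$. For this I would re-run the filtration argument of Proposition~\ref{prop-same-tableau} with the extra parameter, computing the dimensions of the graded pieces $U(\mu)_{\alpha,\ell,m}\subset V_\alpha=\mathcal H_\alpha/\mathcal H_{\alpha-1}$; the point is that at $\mu=0$ exactly one basic element (the one attached to $d_{n'}$, which for $\mu\neq 0$ is tied to $b_n$) drops to a lower degree, decreasing by one the count of empty-or-small boxes in precisely the rows that distinguish $\widetilde\Gamma$ from $\Gamma$. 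Finally, forming $Q(\mu)\oplus S$ (constant in $\mu$) and invoking the additivity of tableaux under direct sums from Section~\ref{sec-tableaux} together with Proposition~\ref{prop-monomorphisms}(1) gives a family with tableau $\Gamma$ for $\mu\neq 0$ and $\widetilde\Gamma$ for $\mu=0$, as required; since the construction is polynomial in $\mu$, it is genuinely a one-parameter family of embeddings in $\Hom_k(N_\alpha,N_\beta)$.
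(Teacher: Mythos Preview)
Your second family --- taking $\phi_\mu\in\End(H)$ to scale $b_n$ by $\mu$ and setting $U(\mu)=\phi_\mu(U)$ --- does not produce tableau $\widetilde\Gamma$ at $\mu=0$; it produces $\Gamma$ again.  The generators of $U$ are $r=(a,\,p^{n'-f}d_{n'})$ and $s=(0,c)$, and $s$ has no $b_n$-component, so $\phi_0(s)=s$ still carries the summand $p^{k_v}d_{n'}$.  Hence $(\phi_0(r),\phi_0(s))$ does \emph{not} split along the decomposition $\widetilde B\oplus\widetilde D$ you describe.  In the running example of Section~\ref{section-extension} ($\Gamma=\Gamma_2$, $\widetilde\Gamma=\Gamma_{3a}$, $n=3$, $n'=2$, $b_n=b_3$, $d_{n'}=d_2$) one gets $r_0=p^3b_5+pd_2$ and $s_0=pd_4+d_2+d_1$; a direct computation of $\gamma^{(i)}=\text{type}(H/p^iU_0)$ yields $(4,3,2,1)\subset(4,3,3,1,1)\subset(5,3,3,2,1)\subset(5,4,3,2,1)$, which is exactly $\Gamma_2$, not $\Gamma_{3a}$.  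Your filtration ``backup'' would merely confirm this; the obstacle you anticipated is fatal for this particular family.

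The missing idea is that simply killing $b_n$ in the original generators is not enough: one must first move the $b_n$-contribution into the \emph{second} generator and then degenerate the $d_{n'}$-contribution there.  The paper replaces $(r,s)$ by $x=(a-p^{\ell_u}b_n,\,p^{n'-f}d_{n'})$, which lies in $\widetilde B$ for all $\mu$, and $y(\mu)=(p^{k_v+n-n'}b_n,\,c+(\mu-1)p^{k_v}d_{n'})$.  For $\mu\neq0$ an explicit change of basis ($d_{n'}'=p^{n-n'}b_n+\mu d_{n'}$, then a rescaling by $\mu$) shows $((x,y(\mu))\subset H)\cong Q$, whence tableau $\Gamma$ by Proposition~\ref{prop-same-tableau}.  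At $\mu=0$ the $d_{n'}$-term of $y$ vanishes, so $y(0)\in\widetilde D=b_n\Lambda\oplus\bigoplus_{i\neq v}d_{\delta_i}\Lambda$, and the embedding visibly splits as $\widetilde R\oplus\widetilde R'$.  The crucial point your construction misses is that both the $b_n$- and the $d_{n'}$-coordinates must be rearranged between the two subspace generators before any scaling takes place.
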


\begin{ex}
We continue the example 
from the beginning of Section~\ref{sec-extension}.
For $\mu\in k$, the module $Q(\mu)$ is defined as follows:
$$
\raisebox{.5cm}{$Q(\mu):$} \quad
\begin{picture}(15,15)
  \multiput(0,0)(0,3)5{\smbox}
  \multiput(3,3)(0,3)3{\smbox}
  \multiput(6,3)(0,3)2{\smbox}
  \multiput(9,0)(0,3)4{\smbox}
  \multiput(12,6)(0,3)1{\smbox}
  \multiput(1.5,4.5)(6,0)2{\sbullet}
  \multiput(4.5,7.5)(3,0)1{\sbullet}
  \multiput(10.5,7.5)(3,0)2{\sbullet}
  \put(1.5,4.5){\line(1,0){6}}
  \put(4.5,7.5){\line(1,0){2}}
  \put(8.5,7.5){\line(1,0){5}}
  \put(7.5,7.5){\makebox[0mm]{$\scriptstyle\mu$}}
  \put(0,-3){$b_5$}
  \put(3,-5){$b_3$}
  \put(6,-3){$d_2$}
  \put(9,-5){$d_4$}
  \put(12,-3){$d_1$}
\end{picture}
\qquad
\raisebox{.5cm}{$Q:$} \quad
\begin{picture}(15,15)
  \multiput(0,0)(0,3)5{\smbox}
  \multiput(3,3)(0,3)3{\smbox}
  \multiput(6,3)(0,3)2{\smbox}
  \multiput(9,0)(0,3)4{\smbox}
  \multiput(12,6)(0,3)1{\smbox}
  \multiput(1.5,4.5)(3,0)3{\sbullet}
  \multiput(7.5,7.5)(3,0)3{\sbullet}
  \put(1.5,4.5){\line(1,0){6}}
  \put(7.5,7.5){\line(1,0){6}}
\end{picture}
\qquad
  \raisebox{.5cm}{$\widetilde R\oplus\widetilde R':$} \quad
  \begin{picture}(6,15)
    \multiput(0,0)(0,3)5{\smbox}
    \multiput(3,3)(0,3)2{\smbox}
    \multiput(1.5,4.5)(3,0)2{\sbullet}
    \put(1.5,4.5){\line(1,0){3}}
  \end{picture}
\;\raisebox{.5cm}{$\oplus$} \;
  \begin{picture}(9,15)
    \multiput(0,0)(0,3)4{\smbox}
    \multiput(3,3)(0,3)3{\smbox}
    \multiput(6,6)(0,3)1{\smbox}
    \multiput(1.5,7.5)(3,0)3{\sbullet}
    \put(1.5,7.5){\line(1,0)6}
  \end{picture}
\raisebox{-.5cm}{$\phantom\oplus$}
$$

We have also pictured $Q$ and $\widetilde R\oplus \widetilde R'$.
Clearly, for $\mu=0$, $Q(\mu)\cong\widetilde R\oplus\widetilde R'$.

\smallskip
For $\mu\neq 0$, write $Q(\mu)$ as the embedding
$((x,y)\subset B\oplus D)$ where $B\oplus D$, 
as the ambient space of $R\oplus R'$,
is generated by $b_5$, $b_3$, $b_2$, $d_4$, $d_1$, as before, and 
$x=p^3 b_5+p d_2$, $y=p b_3+\mu d_2+ p d_4 +d_1$.
The successive substitutions in $Q(\mu)$:
$d_2'=pb_3+\mu d_2$ (so $y=d_2'+p d_4+d_1$, $x=p^3 b_5-p^2 b_3/\mu+d_2'/\mu$)
and $x''=\mu x$, $b_5''=\mu b_5$, $b_3''=-b_3$ 
(so $x''=p^3 b_5''+p^2b_3''+pd_2'$)
show that $Q(\mu)\cong Q$.
\end{ex}

\begin{proof}[Proof of Theorem~\ref{theorem-one-parameter-family}]
The proof will follow the above example.
We assume the $\widetilde\Gamma$ is obtained from $\Gamma$ by an increasing
box move and use the notation from Section~\ref{sec-two-mono}
for the cyclic embeddings $R$, $R'$, $\widetilde R$ and $\widetilde R'$.
Write $R=((a)\subset B)$ where $B=N_\beta$, $a=\sum p^{\ell_i}b_{\beta_i}$,
with $u$ such that $\beta_u=n$.  Similarly, 
$R'=((c)\subset D)$ where $D=N_\delta$, $c=\sum p^{k_i}d_{\delta_i}$, 
with $v$ such that $\delta_v=n'$.
Then $Q$ is the embedding $((r,s)\subset B\oplus D)$ where 
$r=(a,p^{n'-f}d_{n'})$ and $s=(0,c)$, 
as above Proposition~\ref{prop-same-tableau}.

\smallskip
Define $Q(\mu)=((x,y)\subset B\oplus D)$ where
$x=(a-p^{\ell_u}b_n,p^{n'-f}d_{n'})$, $y=(p^{k_v+n-n'}b_n,c+(\mu-1)p^{k_v}d_{n'})$.

\smallskip
For $\mu=0$, there is a decomposition of $Q(0)$ given by the decomposition
of the ambient space as 
$$B\oplus D=
\left(\bigoplus_{i\neq u} b_{\beta_i}\Lambda\oplus d_{\delta_v}\Lambda\right)\oplus
\left(b_{\beta_u}\Lambda\oplus \bigoplus_{i\neq v}d_{\delta_i}\Lambda\right).$$
Note that $x$ is in the first summand, $\widetilde B$ say, and
$\widetilde R\cong ((x)\subset \widetilde B)$.
Also $y$ is in the second summand, $\widetilde D$ say, and
$\widetilde R'\cong ((y)\subset \widetilde D)$. 

\smallskip
Let $\mu\neq 0$, it remains to show that $Q(\mu)\cong Q$.
Substitute $d_{n'}'=p^{n-n'}b_n+\mu d_{n'}$, 
$D'=\bigoplus_{i\neq v}d_{\delta_i}\Lambda\oplus d_{n'}'\Lambda$, this yields
$y=(0,(c-p^{k_v}d_{n'})+p^{k_v}d_{n'}')\in B\oplus D'$, 
$x=(a-p^{\ell_u}b_n-p^{n-f}/\mu b_n,p^{n'-f}/\mu d_{n'}')$.
Then substitute $x''=\mu\cdot x$, $b_{\beta_i}''=\mu\cdot b_{\beta_i}$ 
for $i\neq u$ and $b_n''=-b_n$, $a''=\sum p^{\ell_i}b_{\beta_i}''$
to obtain $x''=(a'',p^{n'-f}d_{n'}')$.  This shows
$$Q(\mu)=((x'',y)\subset B\oplus D')\cong ((r,s)\subset B\oplus D)=Q.$$
\end{proof}

\bigskip
We obtain the following consequence for the representation space
of invariant subspaces of nilpotent linear operators.
Let $\alpha$, $\beta$, $\gamma$ be partitions.
Then the subset $\mathbb V_{\alpha,\gamma}^\beta$ of the affine variety
$\Hom_k(N_\alpha,N_\beta)$ consisting of all $k[T]$-linear 
monomorphisms with cokernel 
isomorphic to $N_\gamma$ is constructible.
If $\mathbb V_\Gamma$ denotes the subset of all embeddings with
tableau $\Gamma$, then the irreducible components of 
$\mathbb V_{\alpha,\gamma}^\beta$ are given by the closures
$\overline{\mathbb V}_\Gamma$, where $\Gamma$ runs over all LR-tableaux of shape
$(\alpha,\beta,\gamma)$.

\medskip
For LR-tableaux $\Gamma$, $\widetilde\Gamma$ of the same shape we write
$\Gamma\preceq_{\sf boundary}\widetilde\Gamma$ if 
$\mathbb V_{\widetilde\Gamma}\cap\overline{\mathbb V}_\Gamma\neq\emptyset$.

\medskip
Theorem~\ref{theorem-one-parameter-family} yields the following 
combinatorial criterion for the boundary relation which we stated above 
as Theorem~\ref{theorem-box-boundary}.

\begin{cor}\mylabel{cor-box-boundary}
  Suppose $\Gamma$, $\widetilde\Gamma$ are LR-tableaux
    such that 
  $\widetilde \Gamma$ is obtained from $\Gamma$ by an increasing
box move.  Then $\Gamma\prec_{\sf boundary}\widetilde\Gamma$. \qed
\end{cor}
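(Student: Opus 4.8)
The plan is to deduce this corollary directly from Theorem~\ref{theorem-one-parameter-family} by reading the one-parameter family $Q(\mu)$ as a morphism of varieties into $\Hom_k(N_\alpha,N_\beta)$ and then applying a standard closure argument. First I would fix once and for all a decomposition of $N_\alpha$ into cyclic summands (choosing generators) and an isomorphism identifying the ambient space $B\oplus D$ of the family with $N_\beta$; if necessary, replace $Q(\mu)$ by $Q(\mu)\oplus S$, where $S$ is the embedding built from the columns common to $\Gamma$ and $\widetilde\Gamma$ and is independent of $\mu$, so that the tableaux in play are the full tableaux of shape $\beta\setminus\gamma$. With respect to these choices, each $Q(\mu)$ is recorded by the $k[T]$-linear monomorphism $f_\mu:N_\alpha\hookrightarrow N_\beta$ sending the chosen generators of $N_\alpha$ to the submodule generators $x(\mu),y(\mu)$ of $Q(\mu)$. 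From the explicit formulas for $x(\mu)$ and $y(\mu)$ in the proof of Theorem~\ref{theorem-one-parameter-family}, the coordinates of $f_\mu$ in the fixed bases are polynomial in $\mu$, so $\mu\mapsto f_\mu$ is a morphism $\phi:\mathbb A^1\to\Hom_k(N_\alpha,N_\beta)$, and since every $Q(\mu)$ is an embedding with cokernel $N_\gamma$, the image of $\phi$ lies in $\mathbb V_{\alpha,\gamma}^\beta$.

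Next I would run the closure argument. For $\mu\neq 0$ the embedding $Q(\mu)$ has tableau $\Gamma$, hence $\phi(\mathbb A^1\setminus\{0\})\subseteq\mathbb V_\Gamma$. Since $\phi$ is continuous, the preimage $\phi^{-1}(\overline{\mathbb V}_\Gamma)$ is closed in $\mathbb A^1$ and contains the dense subset $\mathbb A^1\setminus\{0\}$, hence equals all of $\mathbb A^1$; in particular $f_0=\phi(0)\in\overline{\mathbb V}_\Gamma$. On the other hand $Q(0)$ has tableau $\widetilde\Gamma$, so $f_0\in\mathbb V_{\widetilde\Gamma}$. Therefore $f_0\in\mathbb V_{\widetilde\Gamma}\cap\overline{\mathbb V}_\Gamma$, this intersection is nonempty, and by definition $\Gamma\prec_{\sf boundary}\widetilde\Gamma$.

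The only genuine subtlety is foundational rather than geometric: one must know that the assignment of the chosen generators of $N_\alpha$ to $x(\mu),y(\mu)$ really defines a $\Lambda$-module homomorphism $N_\alpha\to N_\beta$ for \emph{every} $\mu$, including $\mu=0$, and that its cokernel is always $N_\gamma$, so that $\phi$ indeed lands in $\mathbb V_{\alpha,\gamma}^\beta$ throughout. This is exactly what Theorem~\ref{theorem-one-parameter-family} guarantees: both the content of the tableau (equivalently, the isomorphism type $N_\alpha$ of the submodule) and the cokernel type $N_\gamma$ are constant along the family, so the orders of $x(\mu)$ and $y(\mu)$ do not drop at $\mu=0$ and $f_\mu$ is well defined with the prescribed cokernel for all $\mu$. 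Everything else is the routine fact that a tableau stratum of the representation space contains in its closure those strata to which it specializes along a one-parameter family. Hence the corollary is a formal consequence of the theorem, the substantive work having been carried out in Sections~\ref{sec-box}--\ref{sec-extension}.
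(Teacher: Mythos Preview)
Your argument is correct and is exactly the standard deduction the paper has in mind: the paper's own proof is simply ``\qed'', treating the passage from the one-parameter family in Theorem~\ref{theorem-one-parameter-family} to the boundary relation as immediate. You have spelled out that passage carefully, including the point that one may need to take the direct sum with the common summand $S$ so as to realize the full tableaux $\Gamma$ and $\widetilde\Gamma$ rather than just their differing columns; this is the only thing one has to check and you handle it correctly.
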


For LR-tableaux $\Gamma$, $\widetilde\Gamma$ of the same shape 
$(\alpha,\beta,\gamma)$, we have seen in \cite{ks-mfo} 
that the boundary relation implies
the dominance relation (which is given by the dominance relation for
the defining partitions).  
As a consequence, the reflexive and transitive closure of the boundary relation $\prec_{\sf boundary}$
defines a partial order $\leq_{\sf boundary}$ on the set 
of LR-tableaux of the given shape $(\alpha,\beta,\gamma)$.
This is the poset $\mathcal P_{\sf boundary}$.

\medskip
In case the skew diagram $\beta\setminus\gamma$
is a horizontal and vertical strip, the dominance relation is generated
by the increasing box moves (\cite{kst}). 
As a consequence we can describe the relation $\leq_{\sf boundary}$:

\begin{cor}[Corollary~\ref{cor-rook-strip}]
The reflexive and transitive closure
of the boundary relation
is a partial order on the set of all LR-tableaux
of a given shape $(\alpha,\beta,\gamma)$.  In case $\beta\setminus\gamma$ is 
a horizontal and vertical strip, the dominance order, the closure 
of the box order, and the closure of the boundary relation agree.
\qed
\end{cor}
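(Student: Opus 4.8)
The plan is to assemble three facts that are already in place: Corollary~\ref{cor-box-boundary} (equivalently Theorem~\ref{theorem-box-boundary}), which says that an increasing box move produces a boundary relation; the result of \cite{ks-mfo} that the boundary relation implies the dominance relation for LR-tableaux of a fixed shape; and the result of \cite{kst} that for a horizontal and vertical strip $\beta\setminus\gamma$ the dominance order is generated by increasing box moves. Throughout, write $\leq_{\sf dom}$ for the dominance order on LR-tableaux of shape $(\alpha,\beta,\gamma)$.

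First I would treat the general claim that $\leq_{\sf boundary}$ is a partial order. Reflexivity and transitivity are immediate, since by construction $\leq_{\sf boundary}$ is the reflexive and transitive closure of $\prec_{\sf boundary}$. For antisymmetry, suppose $\Gamma\leq_{\sf boundary}\widetilde\Gamma$ and $\widetilde\Gamma\leq_{\sf boundary}\Gamma$. Unwinding the closure, there are chains $\Gamma=\Gamma_0\prec_{\sf boundary}\Gamma_1\prec_{\sf boundary}\cdots\prec_{\sf boundary}\Gamma_m=\widetilde\Gamma$ and one going back. By \cite{ks-mfo}, each single step $\Gamma_i\prec_{\sf boundary}\Gamma_{i+1}$ is also a relation $\Gamma_i\leq_{\sf dom}\Gamma_{i+1}$; since $\leq_{\sf dom}$ is transitive, following the two chains gives $\Gamma\leq_{\sf dom}\widetilde\Gamma$ and $\widetilde\Gamma\leq_{\sf dom}\Gamma$, and since $\leq_{\sf dom}$ is already antisymmetric this forces $\Gamma=\widetilde\Gamma$. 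In passing this shows $\leq_{\sf boundary}\ \subseteq\ \leq_{\sf dom}$.

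Next, assuming $\beta\setminus\gamma$ is a horizontal and vertical strip, I would establish the cycle of inclusions
$$\leq_{\sf dom}\ \subseteq\ \leq_{\sf box}\ \subseteq\ \leq_{\sf boundary}\ \subseteq\ \leq_{\sf dom},$$
which forces the three relations $\leq_{\sf dom}$, $\leq_{\sf box}$, $\leq_{\sf boundary}$ to coincide. The last inclusion is the observation just made. The middle inclusion follows by applying Corollary~\ref{cor-box-boundary} one box move at a time: $\Gamma\prec_{\sf box}\widetilde\Gamma$ implies $\Gamma\prec_{\sf boundary}\widetilde\Gamma$, and passing to reflexive transitive closures gives $\leq_{\sf box}\ \subseteq\ \leq_{\sf boundary}$. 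The first inclusion is precisely the content of \cite{kst}: when $\beta\setminus\gamma$ is a horizontal and vertical strip, any two tableaux comparable in the dominance order can be transformed into one another by a sequence of increasing box moves, that is, $\Gamma\leq_{\sf dom}\widetilde\Gamma$ implies $\Gamma\leq_{\sf box}\widetilde\Gamma$. Chasing the cycle yields $\leq_{\sf dom}=\leq_{\sf box}=\leq_{\sf boundary}$, which is exactly the equivalence of conditions (1), (2), (3) in Corollary~\ref{cor-rook-strip}.

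The only point requiring care --- and the one I regard as the (mild) obstacle --- is keeping the closure operations straight: the implication ``boundary step $\Rightarrow$ dominance step'' can be propagated along a chain only because $\leq_{\sf dom}$ is itself transitive, so one must invoke \cite{ks-mfo} for single steps $\prec_{\sf boundary}$ rather than for the closure $\leq_{\sf boundary}$. Once this is observed, the whole argument is a short chase of inclusions of binary relations, with no computation involved.
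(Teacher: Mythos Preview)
Your argument is correct and matches the paper's own reasoning essentially line for line: antisymmetry of $\leq_{\sf boundary}$ via the implication $\prec_{\sf boundary}\Rightarrow\leq_{\sf dom}$ from \cite{ks-mfo}, and then the cycle $\leq_{\sf dom}\subseteq\leq_{\sf box}\subseteq\leq_{\sf boundary}\subseteq\leq_{\sf dom}$ using \cite{kst} and Corollary~\ref{cor-box-boundary}. Your explicit care about applying \cite{ks-mfo} to single steps $\prec_{\sf boundary}$ and then propagating by transitivity of $\leq_{\sf dom}$ is exactly right and is implicit in the paper's terse treatment.
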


The case where the induced operator on the subspace is $T^2$-bounded
has been studied in \cite{ks-hall,ks-survey}.  Here, the boundary relation
(and not only its closure) is equivalent to the dominance order
(\cite[Section~5.3]{ks-survey}).

\section{Acknowledgement and Dedication}

\begin{acknowledgement}
Part of this research has been obtained while the authors were visiting the 
Mathematische Forschungsinstitut in Oberwolfach.  They would like to thank the institute
for their hospitality and for providing an outstanding environment for mathematical
research. 
\end{acknowledgement}

\begin{dedication}
The authors wish to dedicate this paper to Professor Fred Richman. 
His talk in 1999 about subgroups of $p^5$-bounded abelian groups \cite{R5} 
has introduced the second named author to the Birkhoff Problem.  
It was the first of many delightful presentations of Professor Richman, 
which in turn have led to
numerous inspiring discussions with him.  Since several
years, the Birkhoff Problem and its many variations have been a major theme
in the research of both authors.  
\end{dedication}

\end{document}